\newtheorem{thm}{Theorem}[section]
\newtheorem{lem}[thm]{Lemma}
\newtheorem{defi}[thm]{Definition}
\newtheorem{rem}{Remark}
\journal{}
\begin{document}
\begin{spacing}{1.15}
\begin{CJK*}{GBK}{song}
\begin{frontmatter}
\title{\textbf{On a generalization of the spectral Mantel's theorem}}

\author[label1]{Chunmeng Liu}\ead{liuchunmeng0214@126.com}
\author[label1]{Changjiang Bu\corref{cor}}\ead{buchangjiang@hrbeu.edu.cn}
\cortext[cor]{Corresponding author}

\address{
\address[label1]{College of  Mathematical Sciences, Harbin Engineering University, Harbin 150001, PR China}
}

\begin{abstract}
Mantel's theorem is a classical result in extremal graph theory which implies that the maximum number of edges of a triangle-free graph of order $n$.
In 1970, E. Nosal obtained a spectral version of Mantel's theorem which gave the maximum spectral radius of a triangle-free graph of order $n$.
In this paper, we consider the generalized spectral extremal problems.
The clique tensor of a graph $G$ is proposed and the spectral Mantel's theorem is extended via the clique tensor.
Furthermore, a sharp upper bound of the number of cliques in $G$ via the spectral radius of the clique tensor is obtained.
And we show that the results of this paper implies that a result of Erd\H{o}s [Magyar Tud. Akad. Mat. Kutat\'{o} Int. K\"{o}zl. 7 (1962)] under certain conditions.
\end{abstract}

\begin{keyword}
Mantel's theorem, Spectral radius, Cliques, Tensor
\\
\emph{AMS classification (2020):} 05C35, 05C50
\end{keyword}
\end{frontmatter}

\section{Introduction}

The graphs in this paper are undirected and simple.
Let $G$ be a graph with the set of vertices $V(G)=\{1,2,\ldots,n\}$ and the set of edges $E(G)=\{e_{1},...,e_{m}\}$, the number of edges of $G$ is denoted by $e(G)$.
For a vertex $i\in V(G)$, the neighborhood of $i$ is the set of all vertices adjacent to $i$, denoted by $N(i)$.
A clique is a subset of vertices of a graph such that its induced subgraph is complete, and a clique of size $t$ is called a $t$-clique, denoted by $c_{t}$.
The set of all $t$-cliques in $G$ is denoted by $C_{t}(G)$ and the number of $t$-cliques of $G$ is denoted by $c_{t}(G)$.
Let $T_{r}(n)$ be the complete $r$-partite graph with partitions of sizes $n_{1}\leq n_{2}\leq\cdots\leq n_{r}$ and $n_{r}-n_{1}\leq 1$, the graph $T_{r}(n)$ is called \emph{$r$-partite Tur\'{a}n graph}.

Mantel's theorem, established by Willem Mantel \cite{Mantel} in 1907, is one of the early results in extremal graph theory and states that if $G$ is a triangle-free graph of order $n$, then $e(G)\leq e(T_{2}(n))$, equality holds if and only if $G=T_{2}(n)$.
In 1941, P. Tur\'{a}n \cite{Turan} extended the Mantel's theorem and obtained the well-known Tur\'{a}n theorem.
From then on, Tur\'{a}n type extremal problems have attracted extensive attention \cite{Fuedi}.

Let $\rho(G)$ be the spectral radius of the adjacency matrix of a graph $G$.
The spectral extremal problem is an important topic in the extremal graph theory.
Up to now, there have been many results on the spectral extremal graph theory \cite{spectral extremal surveys,survey on spectral conditions}.
In 1970, E. Nosal \cite{Nosal} obtained the maximum spectral radius of a triangle-free graph $G$ of order $n$ and the conclusion is called the spectral form of Mantel's theorem.

\begin{thm}(Spectral Mantel's theorem \textup{\cite{Nosal}})\label{thm Nosal}
Suppose that $G$ is a triangle-free graph of order $n$, then
\begin{align*}
\rho(G)\leq\rho(T_{2}(n)),
\end{align*}
equality holds if and only if $G=T_{2}(n)$.
\end{thm}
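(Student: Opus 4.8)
The statement is the classical theorem of Nosal, so the plan is to deduce it from Mantel's theorem (quoted in the Introduction) together with the inequality $\rho(G)^{2}\le e(G)$, valid for every triangle-free graph. First I would establish this inequality. Write $A=A(G)$ and let $d_{j}$ be the degree of a vertex $j$. Since $A$ is a nonnegative symmetric matrix, $\rho(G)^{2}=\rho(A^{2})$ is at most the maximum row sum of $A^{2}$, and the $i$-th row sum of $A^{2}$ equals $\sum_{j\in N(i)}d_{j}$ (it counts walks of length two starting at $i$). Hence $\rho(G)^{2}\le\max_{i\in V(G)}\sum_{j\in N(i)}d_{j}$. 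Fix a vertex $i$ attaining this maximum. Because $G$ is triangle-free, $N(i)$ is an independent set, so in the sum $\sum_{j\in N(i)}d_{j}$ every edge of $G$ is counted at most once, since an edge counted twice would have both endpoints in $N(i)$. Therefore $\sum_{j\in N(i)}d_{j}\le e(G)$, which gives $\rho(G)^{2}\le e(G)$.

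Next, Mantel's theorem yields $e(G)\le e(T_{2}(n))=\lfloor n^{2}/4\rfloor$, with equality only when $G=T_{2}(n)$. Since $T_{2}(n)=K_{\lfloor n/2\rfloor,\lceil n/2\rceil}$, its spectral radius is $\rho(T_{2}(n))=\sqrt{\lfloor n/2\rfloor\lceil n/2\rceil}=\sqrt{\lfloor n^{2}/4\rfloor}$. Combining the two steps, $\rho(G)^{2}\le e(G)\le\lfloor n^{2}/4\rfloor=\rho(T_{2}(n))^{2}$, and hence $\rho(G)\le\rho(T_{2}(n))$.

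For the equality case, suppose $\rho(G)=\rho(T_{2}(n))$. Then both inequalities $\rho(G)^{2}\le e(G)$ and $e(G)\le\lfloor n^{2}/4\rfloor$ must be tight, so in particular $e(G)=\lfloor n^{2}/4\rfloor=e(T_{2}(n))$, and the equality part of Mantel's theorem forces $G=T_{2}(n)$; the converse is immediate. The argument is short, so there is no serious obstacle; the only points needing care are the row-sum/counting step, which must genuinely use triangle-freeness to avoid double-counting an edge inside $N(i)$, and the remark that the equality characterization comes for free from Mantel's theorem, with no separate analysis of when $\rho(G)^{2}=e(G)$ is required. If instead one wanted a self-contained spectral proof not invoking Mantel, one could replace the second step by the bound $d_{u}+d_{v}\le n$ valid for every edge $uv$ of a triangle-free graph, applied at a vertex $u$ maximizing the Perron-eigenvector entry; in that route the delicate part would be extracting $G=T_{2}(n)$ directly from the equality conditions.
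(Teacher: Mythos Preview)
Your argument is correct: the inequality $\rho(G)^{2}\le\max_{i}\sum_{j\in N(i)}d_{j}\le e(G)$ is sound for triangle-free $G$, and combining it with Mantel's theorem and the explicit value $\rho(T_{2}(n))=\sqrt{\lfloor n^{2}/4\rfloor}$ yields both the bound and the equality case.

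However, this is not how the paper proceeds. The paper obtains Theorem~\ref{thm Nosal} as the $r=2$ instance of Theorem~\ref{thm main thm 2}, whose proof is a direct structural characterization of the extremal graph: one takes a $K_{r+1}$-free graph $G'$ on $n$ vertices with maximum $r$-clique spectral radius, shows (Lemma~\ref{lem c. connected}) that $G'$ is $r$-clique connected so that a positive Perron eigenvector exists, then uses a Zykov-type symmetrization with that eigenvector (if two non-adjacent vertices have different neighbourhoods, replacing one neighbourhood by the other strictly increases the Rayleigh quotient) to force $G'$ to be complete $r$-partite, and finally (Lemma~\ref{lem r-clique spectral radius}) computes $\rho_{r}$ of a complete $r$-partite graph to see that balancing the parts is optimal. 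For $r=2$ this specializes to: the extremal triangle-free graph is connected, complete bipartite, and balanced.

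The two routes are essentially dual. You deduce the spectral statement from the edge-count statement via $\rho^{2}\le e$; the paper deliberately argues in the opposite direction, proving the spectral bound intrinsically so that Remark~\ref{rem 1} can then recover Mantel's theorem from it. Your proof is shorter and more elementary, but it does not generalize to the $r$-clique tensor setting (there is no obvious analogue of $\rho^{2}\le e$), and in the paper's logical flow it would make the implication ``spectral Mantel $\Rightarrow$ Mantel'' circular. The paper's eigenvector-symmetrization method, by contrast, is what carries the generalization to arbitrary $r$.
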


\begin{rem}\label{rem 1}
V. Nikiforov \cite{Nikiforov_1} proved that the relationship between Theorem \ref{thm Nosal} and Mantel's theorem.
Through the inequality $\frac{2e(G)}{n}\leq\rho(G)$ and Theorem \ref{thm Nosal}, for an $n$ vertices triangle-free graph $G$, then
\begin{align*}
e(G)\leq\left\lfloor\frac{n}{2}\rho(G)\right\rfloor\leq\left\lfloor\frac{n}{2}\rho(T_{2}(n))\right\rfloor
=\left\lfloor\frac{n}{2}\right\rfloor\left\lceil\frac{n}{2}\right\rceil=e(T_{2}(n)).
\end{align*}
It shows that Theorem \ref{thm Nosal} implies Mantel's theorem.
\end{rem}

V. Nikiforov \cite{Nikiforov_2} generalized Theorem \ref{thm Nosal} to $K_{r+1}$-free graphs and gave the spectral Tur\'{a}n's theorem.
\begin{thm}\textup{\cite{Nikiforov_2}}
If $G$ is a $K_{r+1}$-free graph of order $n$, then
\begin{align*}
\rho(G)\leq\rho(T_{r}(n)),
\end{align*}
equality holds if and only if $G=T_{r}(n)$.
\end{thm}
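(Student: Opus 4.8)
The plan is to prove the theorem by a symmetrization argument. Among all $K_{r+1}$-free graphs of order $n$ pick one, $G$, with $\rho(G)$ as large as possible, and show that necessarily $G=T_{r}(n)$; this yields both the bound and the uniqueness of the extremal graph. First note that $G$ may be assumed connected: otherwise, adding an edge between two components keeps the graph $K_{r+1}$-free (the endpoints of the new edge have no common neighbour, so no new clique is created) while strictly increasing the spectral radius. Hence $G$ has a positive Perron eigenvector $\mathbf{x}$ with $A(G)\mathbf{x}=\rho(G)\mathbf{x}$, normalized so that $\|\mathbf{x}\|_{2}=1$; in particular $\mathbf{x}^{T}A(G)\mathbf{x}=\rho(G)>0$.

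The core step is to show that non-adjacency is transitive on $V(G)$, i.e. that $G$ is complete multipartite. Suppose $uv\notin E(G)$ and, relabelling if needed, $x_{u}\ge x_{v}$. Let $G'$ be obtained from $G$ by deleting every edge at $v$ and then joining $v$ to every vertex of $N_{G}(u)$, so that in $G'$ the vertices $u$ and $v$ are non-adjacent and have the same neighbourhood. Then $G'$ is still $K_{r+1}$-free: any clique of $G'$ uses at most one of $u,v$, and, after replacing $v$ by $u$ if necessary, it becomes a clique of $G$ of the same size and hence has at most $r$ vertices. Using $\mathbf{x}$ as a test vector,
\begin{align*}
\rho(G')\ \ge\ \mathbf{x}^{T}A(G')\mathbf{x}
&=\mathbf{x}^{T}A(G)\mathbf{x}+2x_{v}\Big(\textstyle\sum_{w\in N_{G}(u)}x_{w}-\sum_{w\in N_{G}(v)}x_{w}\Big)\\
&=\rho(G)+2\rho(G)\,x_{v}(x_{u}-x_{v})\ \ge\ \rho(G).
\end{align*}
By the choice of $G$, equality holds throughout; thus $x_{u}=x_{v}$ and $\mathbf{x}$ is also a Perron eigenvector of $G'$. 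Reading the eigen-equation of $G'$ at a vertex of the symmetric difference $N_{G}(u)\,\triangle\,N_{G}(v)$ and using $x_{v}>0$ gives a contradiction unless $N_{G}(u)=N_{G}(v)$. Hence every non-edge of $G$ joins two vertices with identical neighbourhoods, non-adjacency is an equivalence relation, and $G=K_{n_{1},\dots,n_{s}}$ for some $s$; being $K_{r+1}$-free forces $s\le r$.

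It remains to single out $T_{r}(n)$ among all complete $s$-partite graphs with $s\le r$. For this I would use that, for $G=K_{n_{1},\dots,n_{s}}$ with every $n_{i}\ge1$, the spectral radius $\rho(G)$ is the unique positive root $\lambda$ of
\begin{align*}
\sum_{i=1}^{s}\frac{\lambda}{\lambda+n_{i}}=s-1,
\end{align*}
obtained from the equitable quotient matrix on the $s$ parts. Since $t\mapsto\lambda/(\lambda+t)$ is strictly convex and $\lambda\mapsto\sum_{i}\lambda/(\lambda+n_{i})$ is strictly increasing, a short comparison shows that both splitting a part of size at least $2$ into two nonempty parts (possible whenever $s<r$ and $n\ge r$) and rebalancing two parts whose sizes differ by at least $2$ strictly increase $\rho$, while obviously preserving $K_{r+1}$-freeness. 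Iterating these moves terminates only at $T_{r}(n)$; the case $n<r$ is trivial since then $T_{r}(n)=K_{n}$ and $\rho(G)\le\rho(K_{n})$ for every graph on $n$ vertices. Therefore $\rho(G)\le\rho(T_{r}(n))$, with equality only for $G=T_{r}(n)$, and in the case $r=2$ this recovers the Spectral Mantel's theorem (Theorem \ref{thm Nosal}).

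The step I expect to be the main obstacle is the core one: checking that the cloned graph $G'$ stays $K_{r+1}$-free and, simultaneously, squeezing the \emph{strict} conclusion $N_{G}(u)=N_{G}(v)$ — rather than merely $\rho(G')\ge\rho(G)$ — out of the Perron comparison. Getting both of these right is precisely what upgrades a soft extremal argument into the sharp theorem together with its uniqueness statement; the remaining ingredients (connectedness of the extremal graph, the quotient-matrix formula for $\rho(K_{n_{1},\dots,n_{s}})$, and the convexity estimates for the splitting and rebalancing moves) are routine once the complete multipartite structure is in hand.
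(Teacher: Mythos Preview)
The paper does not prove this statement; it is quoted from \cite{Nikiforov_2} purely as background, so there is no in-paper proof to compare against. Your argument is a correct proof of Nikiforov's theorem. Structurally it mirrors what the paper does for its own main result (Theorem~\ref{thm main thm 2}) in the tensor setting: take an extremal graph, use a Perron-vector symmetrization to force complete multipartite structure, then optimize over part sizes. Two differences are worth recording. For the symmetrization, the paper (Lemma following Lemma~\ref{lem c. connected}) works with a triple $i,j,k$ where $ik\in E$, $jk\notin E$ and splits into two cases to manufacture a \emph{strict} increase of the Rayleigh form; you instead clone once, accept the possibility of equality, and then read the eigen-equation at a vertex of $N(u)\triangle N(v)$ to squeeze out $N(u)=N(v)$ --- a tidier route in the matrix case. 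For the optimization, the paper computes $\rho_{r}(K_{n_{1},\dots,n_{r}})=(n_{1}\cdots n_{r})^{(r-1)/r}$ explicitly (Lemma~\ref{lem r-clique spectral radius}), while you use the implicit quotient relation $\sum_{i}\lambda/(\lambda+n_{i})=s-1$ together with convexity and monotonicity; both arrive at the balanced partition.

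One small gap: in your reduction to the connected case, ``adding an edge between two components \dots\ strictly increases the spectral radius'' is not true as stated --- if neither endpoint lies in a component realizing $\rho(G)$ the spectral radius may stay the same. You need to add the edge so that one endpoint is in a component attaining $\rho(G)$; then that component becomes a proper subgraph of a larger connected graph and Perron--Frobenius gives the strict increase.
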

In addition, Theorem \ref{thm Nosal} is improved in terms of the number of edges of a graph in \cite{Nikiforov_3}.
For further results, see \cite{Ning Bo,Zhai,Peng}.

In this paper, the clique tensor of a graph is proposed and Theorem \ref{thm Nosal} is extended via the clique tensor.
Let $\mathbb{C}^{n}$ be the set of $n$-dimensional vectors over complex number field $\mathbb{C}$.
An order $m$ dimension $n$ complex tensor $\mathcal{A}=(a_{i_{1}i_{2}\cdots i_{m}})$ is a multidimensional array with $n^{m}$ entries, where $i_{j}=1,2,\ldots,n$, $j=1,2,\ldots,m$.
For vectors $x=(x_{1},\ldots,x_{n})^{T}\in\mathbb{C}^{n}$, the $\mathcal{A}x^{m-1}$ is a vector in $\mathbb{C}^{n}$ whose $i$-th component is $\sum_{i_{2},\ldots,i_{m}=1}^{n}a_{ii_{2}\cdots i_{m}}x_{i_{2}}\cdots x_{i_{m}}$ \cite{Qi}.
In 2005, Qi \cite{Qi} and Lim \cite{Lim} posed the eigenvalues of tensors, respectively.
If there exist $\lambda\in\mathbb{C}$ and nonzero vector $x=(x_{1},\ldots,x_{n})^{T}\in\mathbb{C}^{n}$ satisfy
\begin{align}\label{equ1}
\mathcal{A}x^{m-1}=\lambda x^{[m-1]},
\end{align}
then $\lambda$ is called an \emph{eigenvalue} of $\mathcal{A}$ and $x$ an \emph{eigenvector} of $\mathcal{A}$ corresponding to $\lambda$, where $x^{[m-1]}=(x_{1}^{m-1},\ldots,x_{n}^{m-1})^{T}$. The maximal modulus of all eigenvalues of $\mathcal{A}$ is called the spectral radius of $\mathcal{A}$, denoted by $\rho(\mathcal{A})$. 
The tensor and its eigenvalues have been applied to many fields, such as signal processing \cite{app_1}, automatica \cite{app_2}, polynomial optimization \cite{app_3}, network analysis \cite{app_4}, solving multilinear systems \cite{app_5,app_6}, spectral hypergraph theory \cite{app_7,app_8,app_9,app_10,app_11}, etc.
The authors \cite{me} improved the upper bound of Wilf's chromatic number in terms of the spectral radius of tensors and obtained a formula of the number of cliques of fixed size which involve the spectrum of tensors.

The \emph{$t$-clique tensor} of a graph $G$ is defined as follows.
\begin{defi}\label{defi cliques-tensor}
Let $G$ be a graph with $n$ vertices. An order $t$ and dimension $n$ tensor $\mathcal{A}(G)=(a_{i_{1}i_{2}\cdots i_{t}})$ is called the $t$-clique tensor of $G$, if
\begin{align*}
a_{i_{1}i_{2}\cdots i_{t}}=
\begin{cases}
   \frac{1}{(t-1)!}, &\{i_{1},\ldots,i_{t}\}\in C_{t}(G).\\
   0, &\textup{otherwise}.
\end{cases}
\end{align*}
\end{defi}

The eigenvalue of $\mathcal{A}(G)$ is called the \emph{$t$-clique eigenvalue} of $G$.
The spectral radius of $\mathcal{A}(G)$ is called the \emph{$t$-clique spectral radius} of $G$, denoted by $\rho_{t}(G)$.
When $t=2$, the $2$-clique tensor $\mathcal{A}(G)$ is the adjacency matrix of $G$ and $\rho_{2}(G)$ is the spectral radius of the adjacency matrix of $G$.

In this paper, Theorem \ref{thm Nosal} is generalized.

\begin{thm}\label{thm main thm 2}
Let $G$ be a graph on $n$ vertices. If $G$ is $K_{r+1}$-free, then
\begin{align*}
\rho_{r}(G)\leq \rho_{r}(T_{r}(n)),
\end{align*}
equality holds if and only if $G$ is the $r$-partite Tur\'{a}n graph $T_{r}(n)$.
\end{thm}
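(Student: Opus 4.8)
The plan is to reformulate $\rho_r(G)$ as a constrained optimum over the nonnegative $\ell_r$-sphere and then run a Zykov-type symmetrization, reducing to the case of complete $r$-partite graphs.

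First, $\mathcal{A}(G)$ is a nonnegative symmetric tensor, and a direct count of the ordered tuples realizing a clique gives, for every $x=(x_1,\dots,x_n)^T\ge 0$,
\[
\mathcal{A}(G)x^{r}=r\sum_{K\in C_r(G)}\ \prod_{v\in K}x_v ,\qquad
\bigl(\mathcal{A}(G)x^{r-1}\bigr)_k=\sum_{S\in C_{r-1}(N(k))}\ \prod_{v\in S}x_v ,
\]
where $C_{r-1}(N(k))$ is the set of $(r-1)$-cliques of $G$ contained in $N(k)$. By the Perron--Frobenius theory for nonnegative symmetric tensors one has $\rho_r(G)=\max\{\mathcal{A}(G)x^{r}:\,x\ge 0,\ \sum_{i=1}^n x_i^{r}=1\}$, so it suffices to bound the weighted clique polynomial $f_G(x):=\sum_{K\in C_r(G)}\prod_{v\in K}x_v$ over that sphere. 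If $\omega(G)<r$ then $f_G\equiv 0$ and $\rho_r(G)=0<\rho_r(T_r(n))$, so we may assume $G$ contains a copy of $K_r$.

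The symmetrization step is what I expect to be the technical core. Fix a maximizer $x$ and two non-adjacent vertices $u,v$. Since $u\not\sim v$, no clique of $G$ contains both, so $f_G(x)=x_u A_u+x_v A_v+C$, where $A_w:=\bigl(\mathcal{A}(G)x^{r-1}\bigr)_w$ is the link polynomial at $w$ and $C$ collects the $r$-cliques avoiding both $u$ and $v$; none of $A_u,A_v,C$ involves $x_u$ or $x_v$. Assuming $A_u\ge A_v$, replace $v$ by a clone of $u$ (delete every edge at $v$ and join $v$ to $N(u)$). One checks that the resulting graph $G'$ is still $K_{r+1}$-free, still has $n$ vertices and still contains a $K_r$, and that $f_{G'}(x)=(x_u+x_v)A_u+C\ge f_G(x)$, whence $\rho_r(G')\ge\rho_r(G)$. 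Iterating this classical move, which terminates in a complete multipartite graph, yields a complete $r$-partite graph $H=K_{n_1,\dots,n_r}$ on $n$ vertices (all parts nonempty) with $\rho_r(G)\le\rho_r(H)$.

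It remains to identify the extremal complete $r$-partite graph and to treat equality. For $H=K_{n_1,\dots,n_r}$ with parts $P_1,\dots,P_r$ one has $f_H(x)=\prod_{i=1}^r\bigl(\sum_{v\in P_i}x_v\bigr)$; maximizing over the $\ell_r$-sphere by the power-mean inequality inside each part (which forces $x$ constant on each $P_i$) and by AM--GM across the parts (which balances the part-masses) gives $\rho_r(K_{n_1,\dots,n_r})=\bigl(\prod_{i=1}^r n_i\bigr)^{(r-1)/r}$. Since $\prod_{i=1}^r n_i$ over integer solutions of $n_1+\dots+n_r=n$ is uniquely maximized by the balanced partition, that is by $T_r(n)$, we conclude $\rho_r(G)\le\rho_r(H)\le\rho_r(T_r(n))$. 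For the equality statement, $\rho_r(G)=\rho_r(T_r(n))$ first forces the optimal nonnegative eigenvector $x$ to be strictly positive: restricting to its support $W$, the graph $G[W]$ is $K_{r+1}$-free on $|W|$ vertices with $\rho_r(G[W])\ge\rho_r(G)=\rho_r(T_r(n))\ge\rho_r(T_r(|W|))$, and since $m\mapsto\rho_r(T_r(m))$ is strictly increasing this gives $|W|=n$. With $x>0$, applying the first part of the proof to each cloned graph shows every symmetrization inequality must be an equality, which is possible only if $A_u=A_v$ for all non-adjacent $u,v$; a further analysis of these equality conditions, together with the uniqueness of the balanced partition as maximizer of $\prod_i n_i$, then pins down $G=T_r(n)$. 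The two points most in need of care are the termination/bookkeeping of the symmetrization and this final equality analysis.
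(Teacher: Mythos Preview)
Your approach and the paper's are essentially the same at the core: both use the variational characterization of $\rho_r$, run a Zykov-type symmetrization (clone a vertex onto a non-neighbour with larger link value) to reduce to complete $r$-partite graphs, and then show $\rho_r(K_{n_1,\dots,n_r})=(\prod_i n_i)^{(r-1)/r}$, which is maximized by the balanced partition.

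The execution differs in two places worth noting. First, the paper begins by proving that an extremal graph must be \emph{$r$-clique connected}; this makes the clique tensor weakly irreducible, so Perron--Frobenius supplies a strictly positive eigenvector up front, and the symmetrization then yields a \emph{strict} increase, giving the equality case for free. You instead run the symmetrization with a possibly nonnegative maximizer and only force $x>0$ later via monotonicity of $m\mapsto\rho_r(T_r(m))$; this is fine for the upper bound but leaves more work for the equality analysis. Second, the step you flag as delicate---getting from ``$A_u=A_v$ for all non-adjacent $u,v$'' to $G=T_r(n)$---does need one further idea: when $N(u)\ne N(v)$ one picks $k\in N(u)\setminus N(v)$ and clones \emph{both} $u$ and $k$ onto $v$; the leftover term $\sum_{\{u,k,i_3,\dots,i_r\}\in C_r(G)} x_u x_k x_{i_3}\cdots x_{i_r}$ is strictly positive (using $x>0$ and that the edge $uk$ lies in some $r$-clique, which one may assume after stripping edges that lie in no $r$-clique without changing $\rho_r$), giving the contradiction. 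This is exactly the paper's Case~(ii); with that in hand your outline is complete.
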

Obviously, Theorem \ref{thm main thm 2} is the spectral Mantel's theorem in the case when $r=2$.
Similar to Remark \ref{rem 1}, a natural question is that does Theorem \ref{thm main thm 2} imply the maximum number of $r$-cliques in $K_{r+1}$-free graphs of order $n$?
Therefore, we study the maximum number of cliques of fixed size in graphs in terms of spectral radius of tensors and obtain a following sharp upper bound of $c_{r}(G)$ in terms of $\rho_{r}(G)$.

\begin{thm}\label{thm main thm 1}
For the $r$-clique spectral radius $\rho_{r}(G)$ of a graph $G$ on $n$ vertices, then
\begin{align}\label{equ p-spectral radio 1}
c_{r}(G)\leq\frac{n}{r}\rho_{r}(G).
\end{align}
Furthermore, if the number of $r$-cliques containing $i$ is equal for all $i\in V(G)$, then equality holds in \textup{(\ref{equ p-spectral radio 1})}.
\end{thm}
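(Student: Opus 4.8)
The plan is to obtain both halves of the statement by evaluating the relevant tensor expressions at the all-ones vector $\mathbf{1}=(1,\ldots,1)^{T}\in\mathbb{R}^{n}$ and appealing to the Perron--Frobenius theory of nonnegative symmetric tensors. By Definition \ref{defi cliques-tensor}, $\mathcal{A}(G)$ is symmetric and entrywise nonnegative (it is, in fact, the adjacency tensor of the $r$-uniform hypergraph whose edges are the $r$-cliques of $G$), so $\rho_{r}(G)$ is attained by a nonnegative eigenvector and, writing $\mathcal{A}(G)x^{r}:=\sum_{i=1}^{n}x_{i}(\mathcal{A}(G)x^{r-1})_{i}$, one has the two standard estimates: $\rho_{r}(G)\ge\mathcal{A}(G)x^{r}/\sum_{i=1}^{n}x_{i}^{r}$ for every nonzero $x\ge 0$, and $\rho_{r}(G)\le\max_{1\le i\le n}(\mathcal{A}(G)x^{r-1})_{i}/x_{i}^{r-1}$ for every $x>0$ (the latter by the usual scaling comparison of $x$ against a nonnegative eigenvector belonging to $\rho_{r}(G)$).

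The computational heart of the argument is the identity $(\mathcal{A}(G)\mathbf{1}^{r-1})_{i}=d_{i}$, where $d_{i}$ denotes the number of $r$-cliques of $G$ containing the vertex $i$. Indeed, by Definition \ref{defi cliques-tensor} the $i$-th entry of $\mathcal{A}(G)\mathbf{1}^{r-1}$ equals $\frac{1}{(r-1)!}$ times the number of ordered tuples $(i_{2},\ldots,i_{r})$ with $\{i,i_{2},\ldots,i_{r}\}\in C_{r}(G)$, and each such clique contributes exactly $(r-1)!$ tuples; the normalization $\frac{1}{(r-1)!}$ in the definition of the clique tensor was chosen precisely so that these factors cancel. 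Consequently $\mathcal{A}(G)\mathbf{1}^{r}=\sum_{i=1}^{n}(\mathcal{A}(G)\mathbf{1}^{r-1})_{i}=\sum_{i=1}^{n}d_{i}=r\,c_{r}(G)$, since each $r$-clique is counted once for each of its $r$ vertices.

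With these in hand, (\ref{equ p-spectral radio 1}) is immediate: applying the variational lower bound with $x=\mathbf{1}$ and using $\sum_{i=1}^{n}1^{r}=n$ gives $\rho_{r}(G)\ge\mathcal{A}(G)\mathbf{1}^{r}/n=r\,c_{r}(G)/n$, that is, $c_{r}(G)\le\frac{n}{r}\rho_{r}(G)$. For the equality assertion, assume $d_{i}=d$ for all $i\in V(G)$. Then the identity above reads $\mathcal{A}(G)\mathbf{1}^{r-1}=d\,\mathbf{1}=d\,\mathbf{1}^{[r-1]}$, so $\mathbf{1}$ is a positive eigenvector of $\mathcal{A}(G)$ with eigenvalue $d$; the Collatz--Wielandt upper estimate with $x=\mathbf{1}$ then gives $\rho_{r}(G)\le\max_{i}d_{i}=d$, while the already-proved inequality together with $c_{r}(G)=\frac{1}{r}\sum_{i}d_{i}=\frac{nd}{r}$ gives $\rho_{r}(G)\ge r\,c_{r}(G)/n=d$. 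Hence $\rho_{r}(G)=d$ and $c_{r}(G)=\frac{nd}{r}=\frac{n}{r}\rho_{r}(G)$, so equality holds in (\ref{equ p-spectral radio 1}).

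The main point needing care is invoking the tensor Perron--Frobenius and Collatz--Wielandt estimates at the correct level of generality: the $r$-clique tensor need not be weakly irreducible (its underlying hypergraph may be disconnected, and if $G$ is $K_{r}$-free then $\mathcal{A}(G)$ is the zero tensor and both sides of (\ref{equ p-spectral radio 1}) vanish), so one must use the versions of these inequalities valid for arbitrary nonnegative symmetric tensors rather than only the irreducible case. Granting that, the argument is just the one-line evaluation at $\mathbf{1}$ described above, and I do not anticipate any further obstacle.
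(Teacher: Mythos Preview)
Your proposal is correct and follows essentially the same approach as the paper: the paper plugs $x_{i}=n^{-1/r}$ into Lemma~\ref{Thm Qi2013} to obtain $\rho_{r}(G)\ge r\,c_{r}(G)/n$, and for the equality case invokes Lemma~\ref{lem Yang 1} (the row-sum bounds, i.e.\ Collatz--Wielandt at $\mathbf{1}$) when all ``row sums'' $\sum_{i_{2},\ldots,i_{r}}a_{ii_{2}\cdots i_{r}}=d_{i}$ coincide. Your write-up is a bit more explicit about why the $(r-1)!$ normalization makes these row sums equal to $d_{i}$, and your worry about irreducibility is already addressed in the paper by the fact that Lemmas~\ref{Thm Qi2013} and~\ref{lem Yang 1} hold for arbitrary (symmetric) nonnegative tensors.
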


For a $K_{r+1}$-free graph $G$ of order $n$, P. Erd\H{o}s \cite{Erdos} proved that
\begin{align}\label{equ Erdos}
c_{r}(G)\leq c_{r}(T_{r}(n))
\end{align}
with equality holds if and only if $G$ is the $r$-partite Tur\'{a}n graph $T_{r}(n)$, and \cite{Zykov,Simonovits} gave the same conclusion through different methods.
In Section 3, we will show that Theorem \ref{thm main thm 2} implies (\ref{equ Erdos}) in the case when $r\mid n$.
The remainder of the paper is organized as follows.
In Section 2, we introduce some of the definitions and lemmas required for proofs.
In Section 3, we give the proofs of Theorem \ref{thm main thm 2} and Theorem \ref{thm main thm 1}.

\section{Preliminaries}

The tensor $\mathcal{A}$ is called symmetric if its entries are invariant under any permutation of their indices.
If all elements of a tensor $\mathcal{A}$ are nonnegative, then $\mathcal{A}$ is called the nonnegative tensor.
For the symmetric nonnegative tensor, Qi \cite{Qi2013} gave the following conclusion.

\begin{lem} \label{Thm Qi2013} \textup{\cite{Qi2013}}
Suppose that $\mathcal{A}$ is an order $m$ dimension $n$ symmetric nonnegative tensor, with $m\geq2$. Then
\begin{align*}
\rho(\mathcal{A})=\max\left\{x^{T}\mathcal{A}x^{m-1}: \sum_{i=1}^{n}x_{i}^{m}=1, x=(x_{1},\ldots,x_{n})^{T}\in\mathbb{R}_{+}^{n}\right\},
\end{align*}
where $\mathbb{R}_{+}^{n}$ is the set of $n$-dimensional nonnegative real vectors.
\end{lem}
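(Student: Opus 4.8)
The plan is to let $M$ denote the quantity on the right-hand side, namely the maximum of the continuous function $f(x)=x^{T}\mathcal{A}x^{m-1}$ over the set $\Omega=\{x\in\mathbb{R}_{+}^{n}:\sum_{i=1}^{n}x_{i}^{m}=1\}$. Since $\Omega$ is compact (a closed bounded subset of $\mathbb{R}^{n}$) and $f$ is continuous, this maximum is attained, and $M\geq 0$ because $\mathcal{A}$ is nonnegative and $x\geq 0$ forces $f(x)\geq 0$ on $\Omega$. I would then prove $\rho(\mathcal{A})=M$ by establishing the two inequalities $\rho(\mathcal{A})\leq M$ and $\rho(\mathcal{A})\geq M$ separately.

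For $\rho(\mathcal{A})\leq M$, I would take an arbitrary eigenpair $(\lambda,y)$ with $y\neq 0$, possibly complex, and pass to absolute values. Writing $z=(|y_{1}|,\ldots,|y_{n}|)^{T}\geq 0$, the triangle inequality applied componentwise to the $i$-th component of $\mathcal{A}y^{m-1}=\lambda y^{[m-1]}$ gives $|\lambda|\,z_{i}^{m-1}\leq(\mathcal{A}z^{m-1})_{i}$ for every $i$, where the nonnegativity of the entries of $\mathcal{A}$ is used to move the absolute value inside the sum. Multiplying the $i$-th inequality by $z_{i}$ and summing over $i$ yields $|\lambda|\sum_{i}z_{i}^{m}\leq z^{T}\mathcal{A}z^{m-1}$. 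Since $z\neq 0$ and both sides are homogeneous of degree $m$ in $z$, I can normalize $z$ to the unit $\ell^{m}$-sphere so that the normalized vector lies in $\Omega$, which gives $|\lambda|\leq M$. As $\lambda$ was an arbitrary eigenvalue, $\rho(\mathcal{A})\leq M$.

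For the reverse inequality I would show that $M$ is itself an eigenvalue of $\mathcal{A}$. Let $x^{*}\in\Omega$ attain the maximum. Applying the Lagrange/KKT conditions to the maximization of $f$ subject to $\sum_{i}x_{i}^{m}=1$ and $x\geq 0$, and using that $\nabla f=m\,\mathcal{A}x^{m-1}$ and $\nabla\!\big(\sum_{i}x_{i}^{m}\big)=m\,x^{[m-1]}$ by the symmetry of $\mathcal{A}$, I expect to recover exactly the eigenequation $\mathcal{A}(x^{*})^{m-1}=M\,(x^{*})^{[m-1]}$, after checking that the Lagrange multiplier equals $f(x^{*})=M$ (multiply the stationarity relation by $x^{*}$ and sum). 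Hence $M$ is a real nonnegative eigenvalue, so $M\leq\rho(\mathcal{A})$, and combined with the previous step this gives $\rho(\mathcal{A})=M$.

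The main obstacle is this last step when the maximizer $x^{*}$ lies on the boundary of $\mathbb{R}_{+}^{n}$, i.e.\ has vanishing coordinates, since then stationarity holds only with sign-constrained multipliers on the coordinates where $x_{i}^{*}=0$, and the naive Lagrange equation need not hold there. The resolution is that nonnegativity rescues the argument: on such a coordinate the KKT multiplier forces $(\mathcal{A}(x^{*})^{m-1})_{i}\leq 0$, while $(\mathcal{A}(x^{*})^{m-1})_{i}\geq 0$ holds automatically from the nonnegativity of $\mathcal{A}$ and of $x^{*}$, so $(\mathcal{A}(x^{*})^{m-1})_{i}=0=M(x_{i}^{*})^{m-1}$ and the eigenequation is in fact satisfied at every coordinate. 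Alternatively, one may perturb $\mathcal{A}$ to a strictly positive symmetric tensor to force a strictly positive interior maximizer, prove the identity there, and then pass to the limit using continuity of both $M$ and $\rho(\mathcal{A})$ in the entries of the tensor.
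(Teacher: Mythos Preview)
Your argument is sound. The two halves are exactly what is needed: the inequality $\rho(\mathcal{A})\le M$ follows from the triangle inequality applied to an arbitrary eigenpair and homogeneity, while $M\le\rho(\mathcal{A})$ follows from the KKT conditions at a maximizer on the compact set $\Omega$, with the boundary case handled correctly by combining the sign condition from KKT with the automatic nonnegativity of $(\mathcal{A}(x^{*})^{m-1})_{i}$. Your use of the symmetry of $\mathcal{A}$ to identify $\nabla f(x)=m\,\mathcal{A}x^{m-1}$ is the key place where that hypothesis enters, and you flag it appropriately.

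There is no proof in the paper to compare against: this lemma is quoted from Qi's paper \cite{Qi2013} and stated without proof, as a preliminary result. So your proposal is not competing with the paper's own argument but rather supplying one where the paper defers to the literature. For what it is worth, Qi's original proof proceeds along essentially the same lines as yours (variational characterization plus Lagrange/KKT analysis at the optimizer), so your approach is the standard one.
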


The upper and lower bounds of spectral radius of nonnegative tensors are obtained in \cite{Yang}.
\begin{lem} \label{lem Yang 1} \textup{\cite{Yang}}
Let $\mathcal{A}=(a_{i_{1}i_{2}\cdots i_{m}})$ be an order $m$ dimension $n$ nonnegative tensor. Then
\begin{align*}
\min_{1\leq i\leq n}\sum_{i_{2},\ldots,i_{m}=1}^{n}a_{ii_{2}\cdots i_{m}}\leq\rho(\mathcal{A})\leq\max_{1\leq i\leq n}\sum_{i_{2},\ldots,i_{m}=1}^{n}a_{ii_{2}\cdots i_{m}}.
\end{align*}
\end{lem}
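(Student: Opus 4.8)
The plan is to prove the two inequalities separately, handling the upper bound by an elementary extremal-component argument and the lower bound by a comparison (monotonicity) argument. Throughout I write $R_i=\sum_{i_2,\ldots,i_m=1}^{n}a_{ii_2\cdots i_m}$ for the $i$-th row sum, so the claim is $\min_i R_i\le\rho(\mathcal{A})\le\max_i R_i$. For the upper bound I would start from an arbitrary eigenpair $(\lambda,x)$ with $x\ne 0$ satisfying $\mathcal{A}x^{m-1}=\lambda x^{[m-1]}$, and select an index $k$ with $|x_k|=\max_{1\le i\le n}|x_i|>0$. Reading off the $k$-th coordinate of the eigenvalue equation gives
\[
\lambda x_k^{m-1}=\sum_{i_2,\ldots,i_m=1}^{n}a_{ki_2\cdots i_m}x_{i_2}\cdots x_{i_m},
\]
and taking moduli, using $a_{ki_2\cdots i_m}\ge 0$ together with $|x_{i_j}|\le|x_k|$, yields $|\lambda|\,|x_k|^{m-1}\le R_k\,|x_k|^{m-1}$, hence $|\lambda|\le R_k\le\max_i R_i$. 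Since this bound holds for every eigenvalue and $\rho(\mathcal{A})$ is the maximal modulus of all eigenvalues, it follows that $\rho(\mathcal{A})\le\max_i R_i$.

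For the lower bound, set $r=\min_i R_i$ and construct a nonnegative tensor $\mathcal{B}=(b_{i_1\cdots i_m})$ dominated by $\mathcal{A}$ whose rows all sum to $r$: for each $i$ with $R_i>0$ put $b_{ii_2\cdots i_m}=(r/R_i)\,a_{ii_2\cdots i_m}$, and set $b_{ii_2\cdots i_m}=0$ when $R_i=0$ (in which case $r=0$ and the claim is trivial). Then $0\le\mathcal{B}\le\mathcal{A}$ entrywise since $r/R_i\le 1$, and every row sum of $\mathcal{B}$ equals $r$, so the all-ones vector $e$ satisfies $\mathcal{B}e^{m-1}=r\,e=r\,e^{[m-1]}$; thus $r$ is a nonnegative real eigenvalue of $\mathcal{B}$, giving $\rho(\mathcal{B})\ge r$, while the already-established upper bound applied to $\mathcal{B}$ gives $\rho(\mathcal{B})\le\max_i R_i(\mathcal{B})=r$. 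Hence $\rho(\mathcal{B})=r$, and monotonicity of the spectral radius under entrywise domination of nonnegative tensors then yields $\rho(\mathcal{A})\ge\rho(\mathcal{B})=r=\min_i R_i$.

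The main obstacle is the lower bound, and specifically the appeal to the implication $0\le\mathcal{B}\le\mathcal{A}\Rightarrow\rho(\mathcal{B})\le\rho(\mathcal{A})$. Unlike the matrix case, one cannot simply feed a Perron eigenvector of $\mathcal{A}$ into the coordinate argument above, because for a merely nonnegative (not irreducible) tensor this eigenvector may have vanishing components, and then the coordinate inequality only recovers the partial row sum $\sum_{i_2,\ldots,i_m\in\mathrm{supp}(x)}a_{ki_2\cdots i_m}$ rather than the full $R_k$. I would therefore invoke the monotonicity of $\rho$ for nonnegative tensors from the Perron--Frobenius theory developed in \cite{Yang}, or, to keep the argument self-contained, derive it from the Collatz--Wielandt-type characterization $\rho(\mathcal{A})=\max_{x>0}\min_{i}(\mathcal{A}x^{m-1})_i/x_i^{m-1}$, which is visibly monotone in the entries of $\mathcal{A}$ because $(\mathcal{B}x^{m-1})_i\le(\mathcal{A}x^{m-1})_i$ for every $x\ge 0$.
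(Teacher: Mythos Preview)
The paper does not prove this lemma: it is quoted from \cite{Yang} as a known Perron--Frobenius-type bound and used as a black box later (in the proof of Theorem~\ref{thm main thm 1}). So there is no ``paper's own proof'' to compare against.

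Your argument is nonetheless essentially the standard one and is correct. The upper bound via the maximal-modulus component is exactly how it is done in the source and is unimpeachable. For the lower bound you correctly isolate the real difficulty---a Perron eigenvector of a merely nonnegative tensor can vanish in some coordinates, so one cannot simply rerun the coordinate argument at the minimal component---and your workaround (rescale rows to equalize the row sums, then invoke monotonicity $0\le\mathcal{B}\le\mathcal{A}\Rightarrow\rho(\mathcal{B})\le\rho(\mathcal{A})$) is valid. Two small remarks. First, the monotonicity step is itself a nontrivial Perron--Frobenius fact; the present paper records it separately as Lemma~\ref{lem A<B}, so within this paper you would be deducing one cited lemma from another of comparable depth. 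Second, the Collatz--Wielandt identity you quote, $\rho(\mathcal{A})=\max_{x>0}\min_i(\mathcal{A}x^{m-1})_i/x_i^{m-1}$, is established in \cite{Yang,Friedland} for weakly irreducible tensors; for a general nonnegative $\mathcal{A}$ one passes through the perturbation $\mathcal{A}+\epsilon\mathcal{J}$ and lets $\epsilon\downarrow 0$ using continuity of $\rho$. Once that is in hand, note that your detour through $\mathcal{B}$ is actually unnecessary: plugging $x=e$ directly into the Collatz--Wielandt lower bound already gives $\rho(\mathcal{A})\ge\min_i(\mathcal{A}e^{m-1})_i=\min_i R_i$.
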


For an order $m$ dimension $n$ nonnegative tensor $\mathcal{A}=(a_{i_{1}i_{2}\cdots i_{m}})$, let $G_{\mathcal{A}}=(V(G_{\mathcal{A}}),E(G_{\mathcal{A}}))$ be the digraph of the tensor $\mathcal{A}$ with vertex set $V(G_{\mathcal{A}})=\{1,2,\ldots,n\}$ and arc set $E(G_{\mathcal{A}})=\{(i,j)|a_{ii_{2}\cdots i_{m}}\neq0, j\in\{i_{2},\ldots,i_{m}\}\}$.
The nonnegative tensor $\mathcal{A}$ is weakly irreducible if the corresponding directed graph $G_{\mathcal{A}}$ is strongly connected. Otherwise, the tensor $\mathcal{A}$ is weakly reducible (see \cite{Friedland}).
The Perron-Frobenius theorem for weakly irreducible nonnegative tensors is given in \cite{Friedland}.

\begin{lem}{\rm \cite{Friedland}} \label{thm Perron Th.}
If a nonnegative tensor $\mathcal{A}$ is weakly irreducible, then $\rho(\mathcal{A})$ is the unique positive eigenvalue of $\mathcal{A}$, with the unique positive eigenvector $x$, up to a positive scaling coefficient.
\end{lem}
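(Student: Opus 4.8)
The plan is to deduce the statement from nonlinear Perron--Frobenius theory applied to the homogeneous map $x\mapsto\mathcal{A}x^{m-1}$. Associate to $\mathcal{A}$ the map $F\colon\mathbb{R}^n_{+}\setminus\{0\}\to\mathbb{R}^n_{+}$ given by $F(x)=(\mathcal{A}x^{m-1})^{[1/(m-1)]}$, the componentwise $(m-1)$-th root. Since $\mathcal{A}\geq0$, the map $F$ is continuous, order-preserving ($x\leq y\Rightarrow F(x)\leq F(y)$) and positively homogeneous of degree one, and for $x>0$, $\lambda>0$ the eigenequation $\mathcal{A}x^{m-1}=\lambda x^{[m-1]}$ is equivalent to $F(x)=\lambda^{1/(m-1)}x$; thus positive eigenvectors of $\mathcal{A}$ correspond exactly to fixed rays of $F$. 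Strong connectivity of $G_{\mathcal{A}}$ forces every vertex to have positive out-degree, so every row of $\mathcal{A}$ has a nonzero entry and $(\mathcal{A}x^{m-1})_i>0$ for all $i$ whenever $x>0$; hence $F$ maps the open cone $\mathbb{R}^n_{++}$ into itself. It therefore suffices to produce a \emph{unique} fixed ray of $F$ in $\mathbb{R}^n_{++}$ and to identify its eigenvalue with $\rho(\mathcal{A})$.

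The core of the proof, and the step where weak irreducibility is indispensable, is to make $F$ a strict contraction on the open cone in the Hilbert projective metric
\[
d_H(x,y)=\log\frac{\max_i (x_i/y_i)}{\min_j (x_j/y_j)},
\]
for which the set of rays in $\mathbb{R}^n_{++}$ is a complete metric space. Writing $L(x)=\mathcal{A}x^{m-1}$, monotonicity and degree-$(m-1)$ homogeneity give the Lipschitz bound $d_H(L(x),L(y))\leq(m-1)\,d_H(x,y)$, while the root map satisfies $d_H(u^{[1/(m-1)]},v^{[1/(m-1)]})=\tfrac{1}{m-1}\,d_H(u,v)$; composing shows $F$ is non-expansive. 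Strictness is supplied by the Birkhoff--Hopf theorem: weak irreducibility guarantees that $F$ sends the cone into a set of finite projective diameter, so the Birkhoff contraction coefficient is strictly below one and $F$ is a genuine contraction. The Banach fixed point theorem then yields a unique fixed ray, i.e. a strictly positive eigenvector $x^\ast$, unique up to positive scaling. I expect the main obstacle to be this quantitative Birkhoff--Hopf estimate, namely extracting finiteness of the projective diameter precisely from strong connectivity of $G_{\mathcal{A}}$, together with the borderline order $m=2$, where the root map gives no contraction and the argument must instead be reduced to the classical Perron--Frobenius theorem for irreducible nonnegative matrices.

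Finally I would identify the eigenvalue $\lambda^\ast$ of $x^\ast$ with $\rho(\mathcal{A})$ and conclude uniqueness among positive eigenvalues. Since $x^\ast>0$ is an eigenvector, the Collatz--Wielandt ratios are constant, $\min_{i}(\mathcal{A}(x^\ast)^{m-1})_i/(x^\ast_i)^{m-1}=\max_{i}(\mathcal{A}(x^\ast)^{m-1})_i/(x^\ast_i)^{m-1}=\lambda^\ast$, and the standard two-sided bounds $\max_{x>0}\min_i(\mathcal{A}x^{m-1})_i/x_i^{m-1}\leq\rho(\mathcal{A})\leq\min_{x>0}\max_i(\mathcal{A}x^{m-1})_i/x_i^{m-1}$, which refine the row-sum estimates of Lemma~\ref{lem Yang 1}, squeeze $\lambda^\ast=\rho(\mathcal{A})$. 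Uniqueness is then immediate: any positive eigenpair $(\mu,y)$ gives a fixed ray of $F$, which by the contraction must coincide with the ray of $x^\ast$, whence $\mu=\lambda^\ast=\rho(\mathcal{A})$ and $y$ is a positive multiple of $x^\ast$.
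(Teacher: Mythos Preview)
The paper does not prove this lemma; it is quoted from \cite{Friedland} and used as a black box, so there is no in-paper argument to compare against. Your outline follows the general strategy of that reference---pass to the homogeneous order-preserving map $F(x)=(\mathcal{A}x^{m-1})^{[1/(m-1)]}$ and work in the Hilbert projective metric---and both the non-expansiveness computation and the Collatz--Wielandt identification of $\lambda^\ast$ with $\rho(\mathcal{A})$ are fine.

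The gap is exactly where you suspected it would be, and it is not just a missing estimate. The Birkhoff--Hopf contraction theorem is a statement about \emph{linear} cone maps, whereas $L(x)=\mathcal{A}x^{m-1}$ is a polynomial map, so the theorem does not apply to $L$; and the conclusion you want---that weak irreducibility forces $F(\mathbb{R}^n_{++})$ to have finite projective diameter---is simply false. Take $m=3$, $n=2$, with the only nonzero entries $a_{122}=a_{211}=1$. Then $G_{\mathcal{A}}$ has arcs $1\to 2$ and $2\to 1$, so $\mathcal{A}$ is weakly irreducible, yet $F(x_1,x_2)=(x_2,x_1)$ is an \emph{isometry} of $d_H$ and $F^{2}=\mathrm{id}$, so no iterate of $F$ is a strict contraction and the Banach fixed-point argument cannot yield uniqueness---even though the positive eigenvector $(1,1)$ is in fact unique up to scaling here. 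Thus the issue is not confined to the matrix case $m=2$ you flagged. The proofs in \cite{Friedland} (and in Chang--Pearson--Zhang) obtain existence via a Brouwer-type fixed-point argument on the simplex and deduce uniqueness from a separate extremality/monotonicity argument with the Collatz--Wielandt functional, not from a contraction principle. Your last paragraph is close to the right mechanism for uniqueness and could be developed into a correct argument; the middle paragraph, as written, must be replaced.
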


Let $\mathcal{A}$ and $\mathcal{B}$ be two order $m$ dimension $n$ nonnegative tensors. If $\mathcal{B}-\mathcal{A}$ is nonnegative, we write $\mathcal{A}\leq\mathcal{B}$.

\begin{lem}\textup{\cite{Fan}}\label{lem A<B}
Suppose $0\leq\mathcal{A}\leq\mathcal{B}$, then $\rho(\mathcal{A})\leq\rho(\mathcal{B})$. Furthermore, if $\mathcal{B}$ is weakly irreducible and $\mathcal{A}\neq\mathcal{B}$, then $\rho(\mathcal{A})<\rho(\mathcal{B})$.
\end{lem}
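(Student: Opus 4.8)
The plan is to prove the weak inequality $\rho(\mathcal{A})\le\rho(\mathcal{B})$ by a diagonal–scaling argument that converts the hypothesis $\mathcal{A}\le\mathcal{B}$ into a bound on the row sums of a tensor cospectral with $\mathcal{A}$, and then to obtain the strict inequality by analysing the equality case through the strong connectivity of the digraph $G_{\mathcal{B}}$.

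First I would reduce the weak inequality to the case where $\mathcal{B}$ is weakly irreducible. For $t>0$, let $\mathcal{J}$ be the order $m$ dimension $n$ tensor with all entries equal to $1$; its digraph is complete and hence strongly connected, so $\mathcal{B}_{t}:=\mathcal{B}+t\mathcal{J}$ is weakly irreducible and $0\le\mathcal{A}\le\mathcal{B}\le\mathcal{B}_{t}$. Because the eigenvalues depend continuously on the entries, $\rho(\mathcal{B}_{t})\to\rho(\mathcal{B})$ as $t\to0^{+}$, so it suffices to prove $\rho(\mathcal{A})\le\rho(\mathcal{B}_{t})$ and let $t\to0^{+}$; equivalently, I may assume $\mathcal{B}$ is weakly irreducible. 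By Lemma \ref{thm Perron Th.} there is then a positive eigenvector $y=(y_{1},\dots,y_{n})^{T}$ with $\mathcal{B}y^{m-1}=\rho(\mathcal{B})y^{[m-1]}$. Setting $\tilde a_{ii_{2}\cdots i_{m}}=y_{i}^{-(m-1)}a_{ii_{2}\cdots i_{m}}y_{i_{2}}\cdots y_{i_{m}}$ and substituting $x_{i}=y_{i}z_{i}$ into the eigenequation (\ref{equ1}) shows that $\tilde{\mathcal{A}}=(\tilde a_{ii_{2}\cdots i_{m}})$ and $\mathcal{A}$ have the same eigenvalues, so $\rho(\tilde{\mathcal{A}})=\rho(\mathcal{A})$. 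The $i$-th row sum of $\tilde{\mathcal{A}}$ equals $y_{i}^{-(m-1)}(\mathcal{A}y^{m-1})_{i}\le y_{i}^{-(m-1)}(\mathcal{B}y^{m-1})_{i}=\rho(\mathcal{B})$, using $\mathcal{A}\le\mathcal{B}$ and $y>0$; the upper bound of Lemma \ref{lem Yang 1} then gives $\rho(\mathcal{A})=\rho(\tilde{\mathcal{A}})\le\rho(\mathcal{B})$.

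For the strict inequality, assume $\mathcal{B}$ is weakly irreducible, $\mathcal{A}\ne\mathcal{B}$, and suppose for contradiction that $\rho(\mathcal{A})=\rho(\mathcal{B})=:\rho$. Keep the scaling above, so $\tilde{\mathcal{B}}$ has every row sum equal to $\rho$ while every row sum of $\tilde{\mathcal{A}}$ is at most $\rho$, and $\tilde{\mathcal{A}}\le\tilde{\mathcal{B}}$ with $\tilde{\mathcal{A}}\ne\tilde{\mathcal{B}}$. I would next produce a nonnegative eigenvector of $\tilde{\mathcal{A}}$ for $\rho$: applying Lemma \ref{thm Perron Th.} to the weakly irreducible tensors $\tilde{\mathcal{A}}+t\mathcal{J}$, normalising their positive eigenvectors to have largest entry $1$, and passing to a convergent subsequence as $t\to0^{+}$ yields $w\ge0$, $w\ne0$, $\max_{i}w_{i}=1$, with $\tilde{\mathcal{A}}w^{m-1}=\rho w^{[m-1]}$. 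Let $S=\{i:w_{i}=1\}\ne\emptyset$. For $i\in S$,
\[
\rho=\sum_{i_{2},\dots,i_{m}}\tilde a_{ii_{2}\cdots i_{m}}\,w_{i_{2}}\cdots w_{i_{m}}\le\sum_{i_{2},\dots,i_{m}}\tilde a_{ii_{2}\cdots i_{m}}\le\rho ,
\]
so both inequalities are equalities. Equality in the first forces $w_{i_{2}}=\cdots=w_{i_{m}}=1$ whenever $\tilde a_{ii_{2}\cdots i_{m}}>0$, i.e. every out-neighbour of $i$ in $G_{\mathcal{A}}$ lies in $S$; equality in the second makes the $i$-th row sums of $\tilde{\mathcal{A}}$ and $\tilde{\mathcal{B}}$ coincide, which together with $\tilde{\mathcal{A}}\le\tilde{\mathcal{B}}$ forces rows $i$ of $\mathcal{A}$ and $\mathcal{B}$ to agree, so the out-neighbours of $i$ in $G_{\mathcal{B}}$ coincide with those in $G_{\mathcal{A}}$ and hence lie in $S$. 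Thus no arc of $G_{\mathcal{B}}$ leaves $S$; since $G_{\mathcal{B}}$ is strongly connected this forces $S=V(G_{\mathcal{B}})$, and then the row agreement over all $i$ gives $\mathcal{A}=\mathcal{B}$, contradicting $\mathcal{A}\ne\mathcal{B}$.

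I expect the strict inequality to be the main obstacle. The weak inequality is a clean consequence of diagonal scaling and the row–sum estimate of Lemma \ref{lem Yang 1}, but the equality analysis is delicate for two reasons: $\mathcal{A}$ itself need not be weakly irreducible, so its Perron eigenvector must be obtained by a limiting argument and may have zero entries; and the conclusion hinges on promoting the saturated support set $S$ to all of $V(G_{\mathcal{B}})$, which is exactly where the strong connectivity of $\mathcal{B}$ (rather than of $\mathcal{A}$) is indispensable.
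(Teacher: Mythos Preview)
The paper does not give its own proof of this lemma: it is quoted verbatim as a known result from \cite{Fan} and used as a black box in Section~3. So there is no in-paper argument to compare your proposal against.

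That said, your argument is correct and is essentially the standard route to this inequality for nonnegative tensors. The diagonal rescaling $\tilde a_{ii_{2}\cdots i_{m}}=y_{i}^{-(m-1)}a_{ii_{2}\cdots i_{m}}y_{i_{2}}\cdots y_{i_{m}}$ is exactly the tensor analogue of $D^{-1}AD$ for matrices and is spectrum-preserving; combined with Lemma~\ref{lem Yang 1} it gives the weak inequality cleanly. Your treatment of the strict case is also sound: the limiting construction of a nonnegative Perron eigenvector for $\tilde{\mathcal{A}}$ is needed precisely because $\mathcal{A}$ may fail to be weakly irreducible, and the propagation of the saturated set $S$ through $G_{\mathcal{B}}$ (not $G_{\mathcal{A}}$) is the right use of the weak irreducibility hypothesis on $\mathcal{B}$. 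One small point worth making explicit is that $\rho(\mathcal{B})>0$ whenever $\mathcal{B}$ is weakly irreducible (every row has a positive entry, so the lower bound in Lemma~\ref{lem Yang 1} is positive); this ensures the equalities you analyse are nondegenerate. With that remark added, your write-up would stand on its own as a complete proof of the cited lemma.
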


Shao et al \cite{Shao} obtained the following result on the ``weakly reducible canonical form'' which is a generalization of the corresponding ``reducible canonical form'' theorem for matrices.
\begin{lem}\textup{\cite{Shao}}\label{lem diagonal block}
Let $\mathcal{A}$ be an order $m$ dimension $n$ tensor.
Then there exists positive integers $r\geq1$ and $n_{1},\ldots,n_{r}$ with $n_{1}+\cdots+n_{r}=n$ such that $\mathcal{A}$ is permutational similar to some $(n_{1},\ldots,n_{r})$-lower triangular block tensor, where all the diagonal blocks $\mathcal{A}_{1},\ldots,\mathcal{A}_{r}$ are weakly irreducible.
Furthermore, we have $\rho(\mathcal{A})=\max\{\rho(\mathcal{A}_{1}),\ldots,\rho(\mathcal{A}_{r})\}$.
\end{lem}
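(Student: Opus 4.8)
The plan is to read the decomposition off the digraph $G_{\mathcal{A}}$ introduced above, exactly as one reads a reducible normal form off the digraph of a matrix. First I would decompose the vertex set $\{1,\ldots,n\}$ into the strongly connected components $I_{1},\ldots,I_{r}$ of $G_{\mathcal{A}}$ and form the condensation digraph, whose nodes are these components and which is acyclic. Choosing a topological order of the condensation so that every arc points from a higher-indexed component to a lower-indexed one, and relabelling the vertices of $G_{\mathcal{A}}$ accordingly by a permutation $\sigma$, produces a tensor $\mathcal{B}$ permutationally similar to $\mathcal{A}$ (set $b_{i_{1}\cdots i_{m}}=a_{\sigma(i_{1})\cdots\sigma(i_{m})}$) in which the components occupy consecutive blocks of indices of sizes $n_{1}=|I_{1}|,\ldots,n_{r}=|I_{r}|$, with $n_{1}+\cdots+n_{r}=n$. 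Since permutational similarity preserves the eigenvalues, it suffices to prove both assertions for $\mathcal{B}$.

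Next I would verify the lower-triangular block shape. The key observation is that each initial union $U_{k}=I_{1}\cup\cdots\cup I_{k}$ is out-closed in $G_{\mathcal{A}}$: because the condensation arcs were oriented downward, no arc leaves $U_{k}$, so whenever $i_{1}\in U_{k}$ every nonzero entry $a_{i_{1}i_{2}\cdots i_{m}}$ has all of $i_{2},\ldots,i_{m}$ in $U_{k}$ as well. Translating this to $\mathcal{B}$ gives $b_{i_{1}\cdots i_{m}}=0$ whenever $i_{1}$ lies in block $k$ and some $i_{j}$ lies in a block of larger index, which is precisely the defining condition of an $(n_{1},\ldots,n_{r})$-lower triangular block tensor. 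The diagonal blocks are then the principal subtensors $\mathcal{B}_{k}$ indexed by $I_{k}$. To see that each $\mathcal{B}_{k}$ is weakly irreducible I would compare its digraph $G_{\mathcal{B}_{k}}$ with the subdigraph of $G_{\mathcal{A}}$ induced on $I_{k}$: the out-closedness of $U_{k}$ forces every entry with first index in $I_{k}$ to be supported inside $U_{k}$, and one argues that the arcs among vertices of $I_{k}$ are exactly those produced by entries lying wholly inside $I_{k}$; since $I_{k}$ is a strongly connected component, this digraph is strongly connected, i.e. $\mathcal{B}_{k}$ is weakly irreducible.

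For the spectral-radius identity I would exploit the triangular block shape directly on the eigen-equation. For a nonnegative tensor $\rho(\mathcal{A})$ is itself an eigenvalue with a nonnegative eigenvector $x\neq0$; let $k^{\ast}$ be the smallest block index on which $x$ does not vanish. For $i$ in block $k^{\ast}$ the lower-triangular shape and the vanishing of $x$ on the lower blocks collapse the $i$-th eigen-equation to $\mathcal{B}_{k^{\ast}}(x|_{I_{k^{\ast}}})^{m-1}=\rho(\mathcal{A})\,(x|_{I_{k^{\ast}}})^{[m-1]}$, exhibiting $\rho(\mathcal{A})$ as an eigenvalue of $\mathcal{B}_{k^{\ast}}$ and hence $\rho(\mathcal{A})\le\rho(\mathcal{B}_{k^{\ast}})\le\max_{k}\rho(\mathcal{B}_{k})$. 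The reverse inequality is the easy direction: embedding each principal subtensor $\mathcal{B}_{k}$ back into dimension $n$ by padding with zeros produces a tensor $\le\mathcal{B}$ whose spectral radius equals $\rho(\mathcal{B}_{k})$, so Lemma \ref{lem A<B} gives $\rho(\mathcal{B}_{k})\le\rho(\mathcal{B})$ for every $k$. Combining the two directions yields $\rho(\mathcal{A})=\rho(\mathcal{B})=\max_{k}\rho(\mathcal{B}_{k})$; for a tensor that is not nonnegative the same conclusion follows from the factorization of the characteristic polynomial of a triangular block tensor into the product over its diagonal blocks.

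The step I expect to be the main obstacle is confirming that the diagonal blocks are genuinely weakly irreducible, that is, that passing from $G_{\mathcal{A}}$ to the digraph of the principal subtensor $\mathcal{B}_{k}$ does not destroy strong connectivity. Unlike the matrix case, a single nonzero tensor entry contributes several arcs at once and may straddle two different blocks, so an arc of $G_{\mathcal{A}}$ between two vertices of $I_{k}$ need not come from an entry supported entirely inside $I_{k}$; controlling this is where the out-closedness of $U_{k}$ (and, in the application to the clique tensor, its symmetry, which makes the support of every entry a set of mutually reachable vertices) must be used carefully. The other delicate point is the spectral identity for tensors that are not nonnegative, where the elementary eigenvector argument is unavailable and one must instead invoke the determinant/resultant factorization for triangular block tensors.
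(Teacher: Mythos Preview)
The paper does not prove this lemma at all: it is quoted verbatim from Shao, Shan and Zhang \cite{Shao} and used as a black box, so there is no ``paper's own proof'' to compare your proposal against.

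That said, your outline is the standard route and is essentially what the cited source does: read the block structure from the strongly connected components of $G_{\mathcal{A}}$, order them topologically to obtain the lower-triangular shape, and derive the spectral-radius identity from the block-triangular form (via the resultant factorization in general, or the Perron eigenvector in the nonnegative case). The one place where your sketch is genuinely incomplete is precisely the obstacle you flag yourself. An arc $(i,j)$ inside $I_{k}$ may be produced only by an entry $a_{i j \ell_{3}\cdots\ell_{m}}$ with some $\ell_{s}$ lying in a strictly lower block $I_{k'}$, $k'<k$; such an entry disappears when you pass to the principal subtensor $\mathcal{B}_{k}$, so $G_{\mathcal{B}_{k}}$ can be a proper subdigraph of the induced subdigraph of $G_{\mathcal{A}}$ on $I_{k}$ and may fail to be strongly connected. (A concrete example: order $3$, dimension $3$, with only $a_{123}$ and $a_{213}$ nonzero; then $\{1,2\}$ is a strong component of $G_{\mathcal{A}}$, but the principal subtensor on $\{1,2\}$ is the zero tensor.) The fix is to iterate: if some diagonal block is still weakly reducible, apply the same decomposition to it and refine the block partition; since the dimension strictly drops, this terminates. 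With that inductive step added, your argument goes through.
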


\section{Proofs of Theorems}

A walk is a sequence of edges $e_{1},e_{2},\ldots,e_{m}$ in which $e_{i}$ and $e_{i+1}$ are incident with a common vertex for $i=1,\ldots,m-1$, and a graph is connected if any two of its vertices are joined by a walk.
The adjacency matrix of a graph $G$ is irreducible if and only if $G$ is connected (see \cite{Sachs}).
A \emph{$t$-clique walk} of a graph $G$ is proposed to determine the $t$-clique tensor of $G$ is weakly irreducible, that is,
a sequence of $t$-cliques $c_{t}^{1},c_{t}^{2},\ldots,c_{t}^{m}$ in which $c_{t}^{i}$ and $c_{t}^{i+1}$ have at least one vertex in common for $i=1,\ldots,m-1$.
A graph $G$ is called \emph{$t$-clique connected} if any two of its vertices are joined by a $t$-clique walk.
For the $t$-clique tensor of a graph $G$, we have the following conclusion.

\begin{lem}\label{lem t-clique tensor weakly irr.}
The $t$-clique tensor of a graph $G$ is weakly irreducible if and only if $G$ is $t$-clique connected.
\end{lem}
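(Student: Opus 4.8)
The plan is to translate the weak irreducibility of $\mathcal{A}(G)$ into a connectivity statement about the associated digraph $G_{\mathcal{A}(G)}$, and then identify that digraph's strong connectivity with the notion of $t$-clique connectivity. First I would unwind the definitions: by Definition \ref{defi cliques-tensor}, the nonzero entries of $\mathcal{A}(G)$ are exactly those $a_{i_1 i_2 \cdots i_t}$ with $\{i_1,\ldots,i_t\}\in C_t(G)$, so in the digraph $G_{\mathcal{A}(G)}$ there is an arc $(i,j)$ precisely when $i$ and $j$ both lie in some common $t$-clique of $G$ (with $i\neq j$; one should note the diagonal entries are $0$, so a singleton vertex in no $t$-clique is isolated in $G_{\mathcal{A}(G)}$). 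Observe that this arc relation is symmetric: if $(i,j)$ is an arc because $\{i,j,\ldots\}\in C_t(G)$, then $(j,i)$ is an arc for the same clique. Hence $G_{\mathcal{A}(G)}$ is, as a digraph, essentially an undirected graph $H$ on $V(G)$ in which $i\sim_H j$ iff $i$ and $j$ share a $t$-clique, and strong connectivity of $G_{\mathcal{A}(G)}$ is equivalent to connectivity of $H$.

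Next I would prove the two implications. For the forward direction, suppose $\mathcal{A}(G)$ is weakly irreducible, i.e.\ $G_{\mathcal{A}(G)}$ is strongly connected; I must show any two vertices $u,v$ of $G$ are joined by a $t$-clique walk. By strong connectivity there is a directed path $u=w_0, w_1, \ldots, w_k=v$ in $G_{\mathcal{A}(G)}$; each arc $(w_{\ell-1},w_\ell)$ comes from a $t$-clique $c^{\ell}$ containing both $w_{\ell-1}$ and $w_\ell$. Then $c^1, c^2, \ldots, c^k$ is a sequence of $t$-cliques in which consecutive ones share the vertex $w_\ell$, so it is a $t$-clique walk witnessing that $u$ and $v$ are $t$-clique connected (the degenerate case $u=v$, or $u,v$ in a common clique, being immediate). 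For the converse, suppose $G$ is $t$-clique connected and take any $u\neq v$; pick a $t$-clique walk $c^1,\ldots,c^m$ from $u$ to $v$, so $u\in c^1$, $v\in c^m$, and $c^\ell\cap c^{\ell+1}\neq\emptyset$. Choosing vertices $z_\ell\in c^\ell\cap c^{\ell+1}$, I get that within each clique $c^\ell$ the endpoints $z_{\ell-1},z_\ell$ (with $z_0:=u$, $z_m:=v$) both belong to the same clique, hence are joined by an arc in $G_{\mathcal{A}(G)}$ in both directions; concatenating gives a directed $u$-$v$ walk, and symmetrically a $v$-$u$ walk, so $G_{\mathcal{A}(G)}$ is strongly connected and $\mathcal{A}(G)$ is weakly irreducible.

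The only genuinely delicate point — the one I would be most careful about — is the bookkeeping when $t$-cliques share a vertex that is not an endpoint of the segment we want: one must make sure that "share at least one vertex" really does produce arcs linking the chosen representatives, rather than merely linking some unrelated pair of vertices. This is handled by the explicit choice of the intermediate vertices $z_\ell\in c^\ell\cap c^{\ell+1}$ and the fact that, inside a single $t$-clique, every ordered pair of distinct vertices is an arc of $G_{\mathcal{A}(G)}$ (since all entries indexed by a permutation of that clique's vertex set are nonzero). A secondary subtlety worth a sentence is the treatment of vertices lying in no $t$-clique: such a vertex is isolated both in $G_{\mathcal{A}(G)}$ and under the $t$-clique-walk relation, so the equivalence holds vacuously (or fails on both sides simultaneously) unless $G$ has at most that structure; in particular when $n\geq 1$ and $t>2$ the statement is only non-vacuous once $G$ actually contains $t$-cliques, which I would simply remark. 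With these observations the proof reduces to the routine path-to-walk translations above.
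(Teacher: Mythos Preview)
Your proposal is correct and follows essentially the same approach as the paper: both arguments identify the arc set of the associated digraph $G_{\mathcal{A}(G)}$ with pairs of vertices lying in a common $t$-clique, and then translate between directed walks in $G_{\mathcal{A}(G)}$ and $t$-clique walks in $G$. Your write-up is somewhat more explicit (noting the symmetry of the arc relation, choosing intermediate vertices $z_\ell$, and addressing vertices in no $t$-clique), but the underlying proof is the same.
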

\begin{proof}
Let $\mathcal{A}(G)=(a_{i_{1}i_{2}\cdots i_{t}})$ be the $t$-clique tensor of $G$. Firstly, we prove that $\mathcal{A}(G)$ is weakly irreducible if $G$ is $t$-clique connected.

Let $i_{1},i_{2},\ldots,i_{t}$ be the $t$ vertices of $G$.
Suppose that vertices $i_{1},i_{2},\ldots,i_{t}$ compose a $t$-clique in $G$, we obtain
$a_{i_{1}i_{2}\cdots i_{t}}=a_{\sigma(i_{1}i_{2}\cdots i_{t})}=\frac{1}{(t-1)!}$ by Definition \ref{defi cliques-tensor}, where $\sigma$ denotes a permutation of $\{i_{1},i_{2},\ldots,i_{t}\}$.
By the definition of the digraph $G_{\mathcal{A}(G)}$ of $\mathcal{A}(G)$, vertices $i_{1},i_{2},\ldots,i_{t}$ form a complete digraph of $t$ vertices in $G_{\mathcal{A}(G)}$.
Since $G$ is $t$-clique connected, any two of its vertices are joined by a $t$-clique walk.
Thus, we obtain $G_{\mathcal{A}(G)}$ is strongly connected imply that the $t$-clique tensor $\mathcal{A}(G)$ of $G$ is weakly irreducible.

Next, we prove that $\mathcal{A}(G)$ is weakly reducible if $G$ is not $t$-clique connected.

Since $G$ is not $t$-clique connected, there exist two vertices $i$ and $j$ can not join by any $t$-clique walks.
Therefore, vertices $i$ and $j$ can not join by any walks in the digraph $G_{\mathcal{A}(G)}$ of $\mathcal{A}(G)$, implying that $G_{\mathcal{A}(G)}$ is not strongly connected. That is, the $t$-clique tensor $\mathcal{A}(G)$ is weakly reducible.
\end{proof}

The proof of Theorem \ref{thm main thm 2} is divided into three parts.
Let $G'$ be a $K_{r+1}$-free graph on $n$ vertices with the largest $r$-clique spectral radius.
Firstly, we show that $G'$ is $r$-clique connected.
Secondly, we prove that $G'$ is a complete $r$-partite graph.
Lastly, for all the complete $r$-partite graphs on $n$ vertices, we show that the graph with the maximum $r$-clique spectral radius is the $r$-partite Tur\'{a}n graph $T_{r}(n)$.

\begin{lem}\label{lem c. connected}
The graph $G'$ is $r$-clique connected.
\end{lem}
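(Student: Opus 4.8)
The plan is to argue by contradiction: suppose the extremal $K_{r+1}$-free graph $G'$ is not $r$-clique connected, and construct from it another $K_{r+1}$-free graph on $n$ vertices with strictly larger $r$-clique spectral radius. First I would observe that $\mathcal{A}(G')$ is, by Lemma \ref{lem t-clique tensor weakly irr.}, weakly reducible, so by Lemma \ref{lem diagonal block} it is permutationally similar to a lower triangular block tensor whose diagonal blocks $\mathcal{A}_{1},\ldots,\mathcal{A}_{s}$ are weakly irreducible, and $\rho_{r}(G')=\rho(\mathcal{A}(G'))=\max_{k}\rho(\mathcal{A}_{k})$. The key point is that each diagonal block $\mathcal{A}_{k}$ is itself (after relabeling) the $r$-clique tensor of the induced subgraph $G'[V_{k}]$ on the corresponding vertex set $V_{k}$: since the nonzero entries of $\mathcal{A}(G')$ correspond to $r$-cliques, a block that is weakly irreducible must have all its defining $r$-cliques contained within $V_{k}$, and no $r$-clique of $G'$ straddles two different $V_{k}$'s — otherwise those classes would be merged in the canonical form. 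Hence $\rho_{r}(G')=\rho_{r}(G'[V_{j}])$ for some index $j$, and $G'[V_{j}]$ is itself $K_{r+1}$-free.

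Next I would use the remaining $n-|V_{j}|$ vertices to do strictly better. Take a vertex $u\notin V_{j}$ and consider the graph $H$ obtained from $G'[V_{j}]$ by adding $u$ and joining it to every vertex of some fixed $r$-clique of $G'[V_{j}]$ — this is possible provided $G'[V_{j}]$ contains an $r$-clique, which it does because $\rho_{r}(G'[V_{j}])>0$ (if it were $0$, then $G'$ has no $r$-cliques at all and the theorem is trivial, since then $T_r(n)$ beats it). Actually a cleaner move: attach $u$ to $r-1$ vertices of an existing $r$-clique $\{v_1,\dots,v_r\}$ of $G'[V_{j}]$; this creates at least one new $r$-clique $\{u,v_1,\dots,v_{r-1}\}$ while creating no $(r+1)$-clique, so $H$ is still $K_{r+1}$-free. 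Since $\mathcal{A}(G'[V_j]) \le \mathcal{A}(H)$ with strict inequality and $\mathcal A(H)$ weakly irreducible (the new vertex $u$ shares a vertex with an existing clique, so $H$ is $r$-clique connected), Lemma \ref{lem A<B} gives $\rho_{r}(H) > \rho_{r}(G'[V_j]) = \rho_{r}(G')$. Padding $H$ out to $n$ vertices with isolated vertices does not decrease $\rho_r$ (it only adds zero blocks), contradicting the maximality of $G'$.

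The main obstacle I anticipate is the bookkeeping in identifying each weakly irreducible diagonal block $\mathcal{A}_{k}$ with the $r$-clique tensor of an induced subgraph, and in verifying that no $r$-clique of $G'$ crosses block boundaries — this requires carefully tracing how the permutational similarity in Lemma \ref{lem diagonal block} interacts with the symmetry of $\mathcal{A}(G')$ (all $r!$ coordinate permutations of a clique's index tuple are nonzero simultaneously, which forces the entire vertex set of any clique into a single block). A secondary point needing care is the edge case where $G'[V_j]$ might consist of a single isolated $r$-clique with no room to attach — but then $|V_j| = r \le n$ strictly (assuming $n > r$; the case $n=r$ is immediate since then $T_r(n)=K_r$ is the unique $K_{r+1}$-free graph achieving any $r$-clique at all), so there is always a spare vertex to attach, and the construction goes through.
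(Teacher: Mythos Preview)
Your proposal is correct and follows essentially the same strategy as the paper: assume $G'$ is not $r$-clique connected, use the block decomposition (Lemma~\ref{lem diagonal block}) to locate an $r$-clique connected piece $H$ with $\rho_r(H)=\rho_r(G')$, then attach an outside vertex to $r-1$ vertices of some $r$-clique of $H$ to obtain a larger $K_{r+1}$-free, $r$-clique connected graph and invoke Lemma~\ref{lem A<B} for a strict increase, contradicting maximality. Your attachment step is in fact a little cleaner than the paper's --- by treating the extra vertex as fresh (so it has degree exactly $r-1$ in the new graph) you sidestep the paper's ``choose the $r$-clique with the most neighbours of $j$'' argument for preserving $K_{r+1}$-freeness.
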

\begin{proof}
Let $\mathcal{A}(G')$ be the $r$-clique tensor of $G'$.
Suppose to the contrary that $G'$ is not $r$-clique connected, then $\mathcal{A}(G')$ is weakly reducible by Lemma \ref{lem t-clique tensor weakly irr.}.
Since $\mathcal{A}(G')$ is weakly reducible, there exists a subgraph $H$ of $G'$ such that $H$ is an $r$-clique component (an $r$-clique connected subgraph that is not part of any larger $r$-clique connected subgraph) attaining the $r$-clique spectral radius of $G'$ by Lemma \ref{lem diagonal block}, that is $\rho_{r}(G')=\rho_{r}(H)$.
The graph $G'$ is not $r$-clique connected also implies that there exist two vertices $i$ and $j$ can not join by any $r$-clique walks. For the vertex $i$, we discuss it in two cases.

Case 1: The vertex $i\in V(H)$.
Then the vertex $j\notin V(H)$, that is, the vertex $j$ does not form an $r$-clique with vertices in $H$.
Therefore, the vertex $j$ is adjacent to at most $r-2$ vertices of any $r$-clique in $H$.
Take an $r$-clique $c^{\prime}_{r}=\{v_{1},\ldots,v_{r}\}$ of $H$ with the most vertices adjacent to $j$.
Suppose that vertices $v_{1},\ldots,v_{s}$ are adjacent to $j$, we have $s\leq r-2$.
Let $H'$ be the graph with $V(H')=V(H)\cup\{j\}$ and obtained by adding edges between the vertices $v_{s+1},\ldots,v_{r-1}$ and $j$.
Thus, the vertices $v_{1},\ldots,v_{r-1},j$ form an $r$-clique in $H'$.
Since $G'$ is $K_{r+1}$-free, the graph $H$ is $K_{r+1}$-free implies that $H'$ is also $K_{r+1}$-free.
Otherwise, there exists a vertex $k$ such that $v_{1},\ldots,v_{r-1},j,k$ form an $(r+1)$-clique in $H'$.
That is, the vertices $v_{1},\ldots,v_{r-1},k$ form an $r$-clique in $H$ and $j$ is adjacent to $v_{1},\ldots,v_{s},k$.
This contradicts the choice of $c^{\prime}_{r}$.
The graph $H'$ is $r$-clique connected and $H$ is an $r$-clique connected subgraph of $H'$ with $|V(H)|<|V(H')|$.
Add an isolated vertex to $H$ such that the resulting graph $\hat{H}$ has the same number of vertices as $H'$, and we have $\rho_{r}(H)=\rho_{r}(\hat{H})$.
Since $H'$ is $r$-clique connected, the $r$-clique tensor $\mathcal{A}(H')$ of $H'$ is weakly irreducible by Lemma \ref{lem t-clique tensor weakly irr.}.
The $r$-clique tensor $\mathcal{A}(\hat{H})<\mathcal{A}(H')$ implies that $\rho_{r}(H)=\rho_{r}(\hat{H})<\rho_{r}(H')$ by Lemma \ref{lem A<B}.
Thus $\rho_{r}(G')=\rho_{r}(H)<\rho_{r}(H')$, this contradicts the choice of $G'$.

Case 2: The vertex $i\notin V(H)$.
Similar to Case 1, we can construct a $K_{r+1}$-free graph $H''$ through the vertex $i$ and $H$ such that $\rho_{r}(G')=\rho_{r}(H)<\rho_{r}(H'')$. It is also contradictory.
\end{proof}

\begin{lem}
The graph $G'$ is a complete $r$-partite graph.
\end{lem}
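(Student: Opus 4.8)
The plan is to show first that the complement $\overline{G'}$ of $G'$ is a disjoint union of cliques; granting this, $G'$ is complete $r'$-partite for some $r'$, and $r'=r$ because $G'$ is $K_{r+1}$-free (so $r'\le r$) while $\rho_r(G')\ge\rho_r(T_r(n))>0$ forces $G'$ to contain an $r$-clique (so $r'\ge r$). Assume, for contradiction, that $\overline{G'}$ is not a disjoint union of cliques; equivalently, there are vertices $u,v,w$ with $uv,vw\notin E(G')$ and $uw\in E(G')$.

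By Lemmas~\ref{lem c. connected} and~\ref{lem t-clique tensor weakly irr.}, $\mathcal{A}(G')$ is weakly irreducible, so by Lemma~\ref{thm Perron Th.} it has a positive eigenvector $x$, which we normalise so that $\sum_i x_i^{r}=1$. Then $\mathcal{A}(G')x^{r-1}=\rho_r(G')x^{[r-1]}$, so $x^{T}\mathcal{A}(G')x^{r-1}=\rho_r(G')$, and comparing the $t$-th coordinates of the eigenequation (using $x^{T}\mathcal{A}(G)x^{r-1}=r\sum_{c\in C_r(G)}\prod_{z\in c}x_z$) gives
\begin{align*}
\sum_{c\in C_r(G'),\,t\in c}\ \prod_{z\in c}x_z=\rho_r(G')\,x_t^{\,r}\qquad\text{for every }t\in V(G').
\end{align*}
The key device is a cloning operation: for non-adjacent vertices $a,b$, let $G'_{b\to a}$ be obtained from $G'$ by deleting all edges at $b$ and joining $b$ to every vertex of $N_{G'}(a)$. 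One checks that $G'_{b\to a}$ is again $K_{r+1}$-free (a $K_{r+1}$ in it must use $b$, and replacing $b$ by $a$ gives one in $G'$), and a direct computation from the displayed identity yields
\begin{align*}
x^{T}\mathcal{A}(G'_{b\to a})\,x^{r-1}=\rho_r(G')+r\,\rho_r(G')\,x_b\left(x_a^{\,r-1}-x_b^{\,r-1}\right).
\end{align*}
Since $x$ is admissible in the variational formula of Lemma~\ref{Thm Qi2013} for the symmetric nonnegative tensor $\mathcal{A}(G'_{b\to a})$, and $G'$ has the largest $r$-clique spectral radius among $K_{r+1}$-free graphs on $n$ vertices, the left side is at most $\rho_r(G')$; as $\rho_r(G')>0$ and $x>0$, this forces $x_a\le x_b$. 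Applying this both ways to the non-adjacent pairs $\{u,v\}$ and $\{v,w\}$ gives $x_u=x_v=x_w$.

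The delicate point --- which I expect to be the main obstacle --- is that the cloning inequality has become an equality, so by itself it no longer contradicts the maximality of $G'$: the equality must be converted into structural information. Since $x_u=x_v$, the graph $G_1:=G'_{v\to u}$ satisfies $x^{T}\mathcal{A}(G_1)x^{r-1}=\rho_r(G')$, so by maximality of $G'$ we get $\rho_r(G_1)=\rho_r(G')$, and $x$ is a positive maximiser in the variational formula of Lemma~\ref{Thm Qi2013} for $\mathcal{A}(G_1)$; a Lagrange multiplier argument then shows that $x$ is an eigenvector of $\mathcal{A}(G_1)$, necessarily with eigenvalue $\rho_r(G_1)=\rho_r(G')$. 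Now compare the $w$-th coordinates of the eigenequations of $\mathcal{A}(G_1)$ and of $\mathcal{A}(G')$: since cloning $v$ onto $u$ adds the edge $vw$ (as $w\in N_{G'}(u)\setminus N_{G'}(v)$), the only extra $r$-cliques of $G_1$ containing $w$ are the sets $\{v,w\}\cup S$ with $\{u,w\}\cup S\in C_r(G')$, and accounting for them while using $x_u=x_v$ and $\rho_r(G_1)=\rho_r(G')$ forces $\sum_{c\in C_r(G'):\,u,w\in c}\prod_{z\in c}x_z=0$. As $x>0$, no $r$-clique of $G'$ contains both $u$ and $w$; in particular the edge $uw$ lies in no $r$-clique of $G'$.

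To conclude, let $\widetilde{G}$ be obtained from $G'$ by deleting every edge that lies in no $r$-clique of $G'$. Then $C_r(\widetilde{G})=C_r(G')$, so $\mathcal{A}(\widetilde{G})=\mathcal{A}(G')$, $\rho_r(\widetilde{G})=\rho_r(G')$, and $\widetilde{G}$ is again a $K_{r+1}$-free graph on $n$ vertices with the largest $r$-clique spectral radius and weakly irreducible $r$-clique tensor, so the whole argument above applies verbatim to $\widetilde{G}$. If $\overline{\widetilde{G}}$ were not a disjoint union of cliques, that argument would exhibit an edge of $\widetilde{G}$ lying in no $r$-clique of $\widetilde{G}$, contradicting the construction of $\widetilde{G}$ (every edge of $\widetilde{G}$ lies in some $r$-clique of $G'$, hence of $\widetilde{G}$). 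So $\overline{\widetilde{G}}$ is a disjoint union of cliques, and by the first paragraph $\widetilde{G}$ is complete $r$-partite with parts $V_1,\dots,V_r$. Finally, any edge of $G'$ not present in $\widetilde{G}$ has both ends in a single part $V_i$ (all cross-part pairs are already edges of $\widetilde{G}$), and together with one vertex from each of the remaining $r-1$ parts it would span a $K_{r+1}$ in $G'$ --- impossible. Hence $G'=\widetilde{G}$ is complete $r$-partite.
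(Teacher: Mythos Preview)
Your proof is correct, but it takes a route genuinely different from the paper's. The paper runs a direct Zykov-type symmetrization: having found $i,j,k$ with $ij,jk\notin E(G')$ and $ik\in E(G')$, it compares the weighted clique-degrees $WS_{G'}(\cdot,x)$ and either clones $j$ onto $i$ (or $k$) to obtain a strict improvement, or, when $WS_{G'}(j,x)$ dominates both, clones $i$ and $k$ \emph{simultaneously} onto $j$; the strict gain in that second case is the inclusion--exclusion term counting $r$-cliques of $G'$ through the edge $ik$. This is short, but it rests on the assertion ``each edge of $G'$ is contained in an $r$-clique'', which the paper states without proof and which does not follow from $r$-clique connectedness alone.

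Your argument sidesteps exactly this point. From one-sided cloning plus maximality you get $x_u=x_v=x_w$; you then upgrade the resulting equality to an eigen-equation for the cloned graph $G_1$ via the Lagrange-multiplier (KKT) step, and by comparing the $w$-coordinates conclude that $uw$ lies in no $r$-clique of $G'$. Passing to $\widetilde G$ makes the ``every edge lies in an $r$-clique'' hypothesis hold by construction, so the contradiction closes. Thus your approach is longer and needs one extra analytic ingredient (that a strictly positive maximiser in Lemma~\ref{Thm Qi2013} satisfies the eigen-equation), but it is self-contained; the paper's approach is more direct and would be cleanly repaired precisely by your $\widetilde G$ device, applied at the outset to replace $G'$ by $\widetilde G$.
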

\begin{proof}
Let $\mathcal{A}(G')$ be the $r$-clique tensor of $G'$.
From Lemma \ref{lem c. connected}, the graph $G'$ is $r$-clique connected.
Thus, the $r$-clique tensor $\mathcal{A}(G')$ is weakly irreducible by Lemma \ref{lem t-clique tensor weakly irr.}.
By Lemma \ref{thm Perron Th.}, the $r$-clique spectral radius $\rho_{r}(G')$ is the unique positive eigenvalue of $\mathcal{A}(G')$, with the unique positive eigenvector $x=(x_{1},\ldots,x_{n})^{T}$. For the $r$-clique spectral radius $\rho_{r}(G')$, we have
\begin{align*}
\rho_{r}(G')=x^{T}\mathcal{A}(G')x^{r-1}=r\sum_{\{i_{1},\ldots,i_{r}\}\in C_{t}(G')}x_{i_{1}}\cdots x_{i_{r}}.
\end{align*}
In order to prove that $G'$ is a complete $r$-partite graph, suppose to the contrary that $G'$ is not a complete $r$-partite graph which implies that $G'$ is a complete $s$-partite graph for $s<r$ or there exist two non adjacent vertices $i,j\in V(G')$ such that $N(i)\neq N(j)$. We discuss it in the following two cases.

Case 1. The graph $G'$ is a complete $s$-partite graph for $s<r$, then the $r$-clique does not exist in $G'$.
This contradicts that $G'$ is $r$-clique connected.

Case 2. There exist two non adjacent vertices $i,j\in V(G')$ such that $N(i)\neq N(j)$, suppose that there exists a vertex $k\in V(G')$ adjacent to $i$ and not adjacent to $j$.
For vertex $v\in V(G')$ and the eigenvector $x=(x_{1},\ldots,x_{n})^{T}$ corresponding to $\rho_{r}(G')$, we obtain
\begin{align*}
\rho_{r}(G')x_{v}^{r-1}=\sum_{\{v,i_{2},\ldots,i_{r}\}\in C_{t}(G')} x_{i_{2}}\cdots x_{i_{r}}.
\end{align*}
Let $WS_{G'}(v,x)=\sum_{\{v,i_{2},\ldots,i_{r}\}\in C_{t}(G')} x_{i_{2}}\cdots x_{i_{r}}$.
For $WS_{G'}(i,x)$, $WS_{G'}(j,x)$ and $WS_{G'}(k,x)$, it will be discussed on two cases.

Case \romannumeral1. $WS_{G'}(j,x)<WS_{G'}(i,x)$ or $WS_{G'}(j,x)<WS_{G'}(k,x)$.

If $WS_{G'}(j,x)<WS_{G'}(i,x)$, we delete all edges incident to $j$ and connect $j$ to all vertices in $N(i)$.
After such a graph operation, we get a new graph $G''$, and $G''$ is still $K_{r+1}$-free.
For the graph $G''$ and the positive eigenvector $x=(x_{1},\ldots,x_{n})^{T}$ corresponding to $\rho_{r}(G')$, by Lemma \ref{Thm Qi2013}, we have
\begin{align*}
\rho_{r}(G'') &\geq r\sum_{\{i_{1},\ldots,i_{r}\}\in C_{t}(G'')}x_{i_{1}}\cdots x_{i_{r}}\\
&=r\sum_{\{i_{1},\ldots,i_{r}\} \in C_{t}(G')}x_{i_{1}}\cdots x_{i_{r}}-rx_{j}WS_{G'}(j,x)+rx_{j}WS_{G'}(i,x)\\
&>r\sum_{\{i_{1},\ldots,i_{r}\} \in C_{t}(G')}x_{i_{1}}\cdots x_{i_{r}}=\rho_{r}(G').
\end{align*}
This contradicts the choice of $G'$. When $WS_{G'}(j,x)<WS_{G'}(k,x)$, the proof is similar.

Case \romannumeral2. $WS_{G'}(j,x)\geq WS_{G'}(i,x)$ and $WS_{G'}(j,x)\geq WS_{G'}(k,x)$.

In this case, we delete all edges incident to $i$ and connect $i$ to all vertices in $N(j)$.
Delete all edges incident to $k$ and connect $k$ to all vertices in $N(j)$.
Then we get a graph $G'''$ and $G'''$ is also a $K_{r+1}$-graph.
Similar to Case \romannumeral1, we have
\begin{align*}
\rho_{r}(G''') &\geq r\sum_{\{i_{1},\ldots,i_{r}\} \in C_{t}(G''')}x_{i_{1}}\cdots x_{i_{r}}\\
&=r\sum_{\{i_{1},\ldots,i_{r}\} \in C_{t}(G')}x_{i_{1}}\cdots x_{i_{r}}-rx_{i}WS_{G'}(i,x)-rx_{k}WS_{G'}(k,x)\\
&+rx_{i}WS_{G'}(j,x)+rx_{k}WS_{G'}(j,x)+r\sum_{\{i,k,i_{3},\ldots,i_{r}\}\in C_{t}(G')}x_{i}x_{k}x_{i_{3}}\cdots x_{i_{r}}.
\end{align*}
Since $i$ and $k$ are adjacency in $G'$ and each edge of $G'$ is contained in an $r$-clique, we obtain
\begin{align*}
r\sum_{\{i,k,i_{3},\ldots,i_{r}\}\in C_{t}(G')}x_{i}x_{k}x_{i_{3}}\cdots x_{i_{r}}>0.
\end{align*}
Hence, we have $\rho_{r}(G''')>\rho_{r}(G')$. This also contradicts the choice of $G'$.

To sum up, the graph $G'$ is a complete $r$-partite graph.
\end{proof}

\begin{lem}\label{lem r-clique spectral radius}
For the $r$-clique spectral radius of the complete $r$-partite graph on $n$ vertices, we have
\begin{align*}
\rho_{r}(T_{r}(n))=\max\{\rho_{r}(H):\textup{$H$ is a complete $r$-partite graph on $n$ vertices}\},
\end{align*}
where $T_{r}(n)$ is the $r$-partite Tur\'{a}n graph.
\end{lem}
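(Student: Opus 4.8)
The plan is to reduce the statement to a short optimization over the part sizes. Let $H$ be a complete $r$-partite graph on $n$ vertices with parts $V_1,\ldots,V_r$ of sizes $n_1,\ldots,n_r$, so that $n_1+\cdots+n_r=n$. The $r$-cliques of $H$ are exactly the transversals of the partition, i.e.\ the sets $\{i_1,\ldots,i_r\}$ with $i_k\in V_k$ for each $k$; hence, for every $x=(x_1,\ldots,x_n)^T\in\mathbb R_+^n$,
\begin{align*}
x^T\mathcal A(H)x^{r-1}=r\sum_{\{i_1,\ldots,i_r\}\in C_r(H)}x_{i_1}\cdots x_{i_r}=r\prod_{k=1}^r\Bigl(\sum_{i\in V_k}x_i\Bigr),
\end{align*}
the last equality coming from expanding the product (one ordered transversal per $r$-clique). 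Since $\mathcal A(H)$ is symmetric and nonnegative, Lemma \ref{Thm Qi2013} gives
\begin{align*}
\rho_r(H)=\max\Bigl\{\,r\prod_{k=1}^r\Bigl(\sum_{i\in V_k}x_i\Bigr):\ x\in\mathbb R_+^n,\ \sum_{i=1}^n x_i^r=1\,\Bigr\}.
\end{align*}

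I would then evaluate this maximum explicitly. Given a feasible $x$, put $t_k=\sum_{i\in V_k}x_i^r\ge0$, so $\sum_k t_k=1$. The power-mean inequality inside each part gives $\sum_{i\in V_k}x_i\le n_k^{(r-1)/r}t_k^{1/r}$, with equality iff the $x_i$ with $i\in V_k$ are all equal; multiplying over $k$ and then applying AM--GM to $t_1,\ldots,t_r$,
\begin{align*}
r\prod_{k=1}^r\Bigl(\sum_{i\in V_k}x_i\Bigr)
&\le r\Bigl(\prod_{k=1}^r n_k\Bigr)^{(r-1)/r}\Bigl(\prod_{k=1}^r t_k\Bigr)^{1/r}\\
&\le r\Bigl(\prod_{k=1}^r n_k\Bigr)^{(r-1)/r}\cdot\frac1r=\Bigl(\prod_{k=1}^r n_k\Bigr)^{(r-1)/r}.
\end{align*}
Taking $x_i=(rn_k)^{-1/r}$ for $i\in V_k$ (assuming each $n_k\ge1$; if some $n_k=0$ then $H$ has no $r$-clique and $\rho_r(H)=0$) produces a feasible vector attaining both inequalities, so
\begin{align*}
\rho_r(H)=\Bigl(\prod_{k=1}^r n_k\Bigr)^{(r-1)/r}=c_r(H)^{(r-1)/r}.
\end{align*}

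Finally I would optimize over the sizes. Since $t\mapsto t^{(r-1)/r}$ is increasing, it suffices to show that $\prod_{k=1}^r n_k$, over positive integers $n_1,\ldots,n_r$ with fixed sum $n$, is maximized precisely when $|n_a-n_b|\le1$ for all $a,b$ — equivalently, when $H\cong T_r(n)$. Indeed, if $n_a\ge n_b+2$ for some $a,b$, then replacing $(n_a,n_b)$ by $(n_a-1,n_b+1)$ keeps the sum fixed and multiplies the product by $\tfrac{(n_a-1)(n_b+1)}{n_a n_b}=1+\tfrac{n_a-n_b-1}{n_a n_b}>1$; hence any maximizing multiset of sizes has all parts within $1$ of each other, and such a multiset is uniquely determined by $n$ and $r$ (it has $n-r\lfloor n/r\rfloor$ parts of size $\lceil n/r\rceil$ and the remaining parts of size $\lfloor n/r\rfloor$). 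Combining this with the formula above yields $\rho_r(H)\le\rho_r(T_r(n))$ for every complete $r$-partite $H$ on $n$ vertices, with equality iff $H\cong T_r(n)$.

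The argument is essentially routine; the one place to be careful is the equality analysis — verifying that the power-mean step forces uniform weights inside each part and that AM--GM forces $t_1=\cdots=t_r$, so the bound is genuinely attained by $x_i=(rn_k)^{-1/r}$, together with the observation that $\rho_r(H)$ depends on $H$ only through the multiset $\{n_1,\ldots,n_r\}$, which is what pins down $T_r(n)$ as the unique extremal complete $r$-partite graph (and hence, with the two preceding lemmas, completes the proof of Theorem \ref{thm main thm 2}).
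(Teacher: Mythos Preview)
Your argument is correct and reaches the same closed form $\rho_r(H)=(n_1n_2\cdots n_r)^{(r-1)/r}$ as the paper, but by a genuinely different route. The paper invokes Perron--Frobenius (Lemma~\ref{thm Perron Th.}) to obtain a positive eigenvector, uses the automorphism symmetry of $H$ to see that the entries are constant on each part, writes down the $r$ eigenvalue equations~(\ref{equ characteristic equations}), and simply multiplies them together to read off $\rho_r(H)$. You instead work entirely through the variational characterization of Lemma~\ref{Thm Qi2013}: you identify $x^T\mathcal A(H)x^{r-1}$ as $r\prod_k\bigl(\sum_{i\in V_k}x_i\bigr)$, then bound this by H\"older (power mean) within each part and AM--GM across parts, and exhibit an explicit maximizer. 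The paper's approach is algebraically slicker and makes clear why the Perron eigenvector is constant on parts; your approach is more self-contained as an optimization and bypasses the need to argue symmetry of the Perron vector, while also identifying the extremal $x$ directly. Both feed into the same final step---maximizing $\prod_k n_k$ over integer compositions of $n$---which you carry out more explicitly than the paper does.
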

\begin{proof}
Let $H$ be the complete $r$-partite graph on $n$ vertices with partitions of sizes $n_{1},n_{2},\ldots,n_{r}$.
And let $\rho_{r}(H)$ be the $r$-clique spectral radius of $H$ and $x=(x_{1},\ldots,x_{n})^{T}$ be the eigenvector corresponding to $\rho_{r}(H)$.
Since $H$ is $r$-clique connected, the $r$-clique tensor $\mathcal{A}(H)$ is weakly irreducible.
By Lemma \ref{thm Perron Th.}, we obtain $\rho_{r}(H)>0$ with the positive eigenvector $x=(x_{1},\ldots,x_{n})^{T}$.
The vertices in each part of $H$ have the same neighborhood implies that the components of the eigenvectors corresponding to these vertices are equal.
Let components of the eigenvectors corresponding to the vertices in each part of $H$ be $x_{n_{s}}$, $s=1,2,\ldots,r$.
Hence, for the characteristic equations of $\rho_{r}(H)$, we have
\begin{align}\label{equ characteristic equations}
\left\{
\begin{aligned}
&\rho_{r}(H)x_{n_{1}}^{r-1}=n_{2}n_{3}\cdots n_{r}x_{n_{2}}x_{n_{3}}\cdots x_{n_{r}}\\
&\rho_{r}(H)x_{n_{2}}^{r-1}=n_{1}n_{3}\cdots n_{r}x_{n_{1}}x_{n_{3}}\cdots x_{n_{r}}\\
&\cdots\\
&\rho_{r}(H)x_{n_{r}}^{r-1}=n_{1}n_{2}\cdots n_{r-1}x_{n_{1}}x_{n_{2}}\cdots x_{n_{r-1}}
\end{aligned}
\right.
\end{align}
Multiply $r$ equations in (\ref{equ characteristic equations}), we get
\begin{align*}
(\rho_{r}(H))^{r}x_{n_{1}}^{r-1}x_{n_{2}}^{r-1}\cdots x_{n_{r}}^{r-1}=(n_{1}n_{2}\cdots n_{r})^{r-1}x_{n_{1}}^{r-1}x_{n_{2}}^{r-1}\cdots x_{n_{r}}^{r-1}.
\end{align*}
Since $x_{n_{1}},x_{n_{2}},\ldots,x_{n_{r}}$ are all positive, the $r$-clique spectral radius $\rho_{r}(H)=(n_{1}n_{2}\cdots n_{r})^{\frac{r-1}{r}}$.
Thus, the $r$-clique spectral radius $\rho_{r}(H)$ is maximal if and only if the $n_{1},n_{2},\ldots,n_{r}$ are as equal as possible, that is $H\simeq T_{r}(n)$.
\end{proof}

Next, we give the proof of Theorem \ref{thm main thm 1} in the following.

\begin{proof}[Proof of Theorem \ref{thm main thm 1}]

By the definition of symmetric tensor, the entries of the $r$-clique tensor $\mathcal{A}(G)$ of $G$ are invariant under any permutation of their indices, thus $\mathcal{A}(G)$ is a symmetric tensor. For the $r$-clique tensor $\mathcal{A}(G)=(a_{i_{1}i_{2}\cdots i_{r}})$, we have
\begin{align}\label{equ the sum of k_p+1 tensor}
\sum_{i_{2},\ldots,i_{r}=1}^{n} a_{ii_{2}\cdots i_{r}} \ \textup{equals the number of $r$-cliques containing $i$} \ (i=1,\ldots,n).
\end{align}

Let $\rho_{r}(G)$ be the spectral radius of the $r$-clique tensor $\mathcal{A}(G)$.
Since $\mathcal{A}(G)$ is a symmetric nonnegative tensor, and by Lemma \ref{Thm Qi2013}, we obtain
\begin{align}\label{equ Rayleigh quotient}
\rho_{r}(G)=\max\left\{x^{T}\mathcal{A}x^{r-1}: \sum_{i=1}^{n}x_{i}^{r}=1, x=(x_{1},\ldots,x_{n})^{T}\in\mathbb{R}_{+}^{n}\right\}.
\end{align}

Let $x_{i}=n^{-\frac{1}{r}} \ (i=1,\ldots,n)$ in (\ref{equ Rayleigh quotient}). Then

\begin{align*}
\rho_{r}(G) &\geq x^{T}\mathcal{A}(G)x^{r-1}=\sum_{i_{1},\ldots,i_{r}}^{n} a_{i_{1}\cdots i_{r}}x_{i_{1}}\cdots x_{i_{r}}\\
&=\frac{\sum_{i=1}^{n}\sum_{i_{2},\ldots,i_{r}=1}^{n} a_{ii_{2}\cdots i_{r}}}{n}\\
&=\frac{r\cdot c_{r}(G)}{n}.
\end{align*}
Therefore, the inequality \textup{(\ref{equ p-spectral radio 1})} is obtained.

If the number of $r$-cliques containing $i$ is equal for all $i\in V(G)$, and by (\ref{equ the sum of k_p+1 tensor}), then

\begin{align*}
\sum_{i_{2},\ldots,i_{r}=1}^{n} a_{ii_{2}\cdots i_{r}}=\frac{r\cdot c_{r}(G)}{n} \ (i=1,\ldots,n).
\end{align*}

For the spectral radius $\rho_{r}(G)$, by Lemma \ref{lem Yang 1},  we have

\begin{align*}
\rho_{r}(G)=\frac{r\cdot c_{r}(G)}{n}.
\end{align*}
\end{proof}

Next, we show that the relation between Theorem \ref{thm main thm 2} and inequality (\ref{equ Erdos}).
Through Theorem \ref{thm main thm 2} and Theorem \ref{thm main thm 1}, for an $n$ vertices $K_{r+1}$-free graph $G$, we obtain
\begin{align*}
c_{r}(G)\leq\left\lfloor\frac{n}{r}\rho_{r}(G)\right\rfloor\leq\left\lfloor\frac{n}{r}\rho_{r}(T_{r}(n))\right\rfloor
=\left\lfloor\frac{n}{r}\left(\prod_{s=0}^{r-1}\left\lfloor\frac{n+s}{r}\right\rfloor\right)^{\frac{r-1}{r}}\right\rfloor.
\end{align*}
When $r\mid n$, we have $\left\lfloor\frac{n}{r}\left(\prod_{s=0}^{r-1}\left\lfloor\frac{n+s}{r}\right\rfloor\right)^{\frac{r-1}{r}}\right\rfloor
=\left(\frac{n}{r}\right)^r=c_{r}(T_{r}(n))$.
Thus, Theorem \ref{thm main thm 2} implies (\ref{equ Erdos}) in the case when $r\mid n$.
When $r \nmid n$, the following example is considered.
For $3$-partite Tur\'{a}n graph with 28 vertices, the number of triangles in $T_{3}(28)$ is
$c_{3}(T_{3}(28))=9\times9\times10=810<
\left\lfloor\frac{28}{3}\left(\prod_{s=0}^{2}\left\lfloor\frac{28+s}{3}\right\rfloor\right)^{\frac{2}{3}}\right\rfloor=811$.

\section*{Acknowledgement}

This work is supported by the National Natural Science Foundation of China (No. 11801115, No. 12071097 and No. 12042103), the Natural Science Foundation of the Heilongjiang Province (No. QC2018002) and the Fundamental Research Funds for the Central Universities.

\vspace{3mm}
\noindent
\textbf{References}

\end{CJK*}
\end{spacing}
\end{document}